\newtheorem{theorem}{Theorem}[section]
\newaliascnt{corx}{thmx}
\newaliascnt{lemma}{theorem}
\newtheorem{lemma}[lemma]{Lemma}
\newaliascnt{proposition}{theorem}
\newtheorem{proposition}[proposition]{Proposition}
\newaliascnt{corollary}{theorem}
\newtheorem{corollary}[corollary]{Corollary}
\newaliascnt{conjecture}{theorem}
\newtheorem{conjecture}[conjecture]{Conjecture}
\newaliascnt{example}{theorem}
\newaliascnt{question}{theorem}
\newcommand{\B}{{\mathbb B}}
\newcommand{\C}{{\mathbb C}}
\newcommand{\deltastar}{\Delta^{\! \s}} 
\newcommand{\e}{\varepsilon}
\newcommand{\R}{{\mathbb R}}
\newcommand{\Rm}{{{\mathbb R}^m}}
\newcommand{\Rn}{{{\mathbb R}^n}}
\newcommand{\Rnp}{{{\mathbb R}^{n+1}}}
\newcommand{\Sph}{{\mathbb S}}
\newcommand{\Sphn}{{{\mathbb S}^n}}
\newcommand{\s}{{\text{\small $\star$}}}
\newcommand{\xh}{\hat{x}}
\newcommand{\Real}{\operatorname{Re}}
\newcommand{\capone}{\operatorname{Cap_1}}
\newcommand{\capp}{\operatorname{Cap_\mathit{p}}}
\newcommand{\capnone}{\operatorname{Cap_{\mathit{n}-1}}}
\newcommand{\capmtwo}{\operatorname{Cap_{\mathit{m}-2}}}
\newcommand{\capntwo}{\operatorname{Cap_{\mathit{n}-2}}}
\newcommand{\capzero}{\operatorname{Cap_0}}
\title[Moments of equilibrium measures]{Balls minimize moments of logarithmic and Newtonian equilibrium measures}
\author{Carrie Clark and Richard S. Laugesen}
\email{carriec2@illinois.edu, Laugesen@illinois.edu}
\address{University of Illinois, Urbana, IL 61801, USA}
\keywords{Riesz capacity, electrostatic, potential theory}
\subjclass[2020]{\text{Primary 31A15, 31B15. Secondary 35B51}}
\begin{document}

\begin{abstract}
The $q$-th moment ($q>0$) of electrostatic equilibrium measure is shown to be minimal for a centered ball among $3$-dimensional sets of given capacity, while among $2$-dimensional sets a centered disk is the minimizer for $0<q \leq 2$. Analogous results are developed for Newtonian capacity in higher dimensions and logarithmic capacity in $2$ dimensions. Open problems are raised for Riesz equilibrium moments. 
\end{abstract}

\maketitle

\section{\bf Introduction}

The moments of electrostatic equilibrium measure on a conductor quantify how much that conductor ``spreads out''. It seems plausible that among sets with given capacity, the ball should minimize moments of equilibrium measure. This paper proves minimality of certain moments for the ball among $n$-dimensional sets with given Newtonian capacity in $n$ and $n+1$ dimensions. These results suggest a conjecture for a whole family of Riesz capacities. 

Riesz capacity $\capp(\cdot)$ and logarithmic capacity $\capzero(\cdot)$, along with the corresponding energies $V_p$ and $V_{log}$ and their equilibrium measures, are defined in \autoref{sec:capacities}. The notion of inner capacity zero is defined there too. The Newtonian case occurs when $p=n-2$ and $n \geq 3$. 

\subsection{Minimal moments of equilibrium measures} The first theorem says that among compact sets with given Newtonian capacity in $\Rn, n \geq 3$, or with given logarithmic capacity in $\R^2$, the $q$-th moment of equilibrium measure with $q>0$ is minimal for the ball. 

%
 \begin{theorem}[Moments of Newtonian equilibrium measure are minimal for the ball]  \label{th:momentnewton}
Let $n \geq 2$. Suppose the compact set $K \subset \Rn$ has the same $(n-2)$-capacity as a closed ball $B \subset \Rn$ centered at the origin, $\capntwo(K)=\capntwo(B)>0$, and write $\mu$ and $\nu$ for the $(n-2)$-equilibrium measures of $K$ and $B$ respectively. 

(i) If $q>0$, or else if $q<-(n-2)$ and the origin is a regular point of $K$, then 
\[
\int_K |x|^q \, d\mu \geq \int_B |x|^q \, d\nu .
\]

(ii) Letting $q \to 0$, 
\[
\int_K \log |x| \, d\mu \geq \int_B \log |x| \, d\nu .
\]

(iii) If $-(n-2) \leq q < 0$ then the inequality is reversed:
\[
\int_K |x|^q \, d\mu \leq \int_B |x|^q \, d\nu .
\]

(iv) (Equality statements) In part (i), equality holds if and only if $K$ contains the sphere $\partial B$ and $K \setminus B$ has inner $(n-2)$-capacity zero; the same is true for part (ii) when $n \geq 3$, and for part (iii) when $-(n-2) < q < 0$. For part (ii) with $n=2$, and part (iii) with $n \geq 3$ and $q=-(n-2)$, equality holds if and only if the origin is a regular point of $K$.
\end{theorem}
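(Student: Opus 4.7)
The strategy is to reduce the entire family of inequalities to a single application of Jensen's inequality on the probability measure $\mu/M$, where $M=\mu(K)=\nu(B)$, drawing from potential theory only the Frostman--Maria maximum principle $U^\mu\le V$ evaluated at the origin, with $V=V_K=V_B$ the common equilibrium energy.

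Since $|x|^q$ is radial, the moment depends only on the distribution of $|x|$ under $\mu$. Introduce the random variable $Y(x)=|x|^{-(n-2)}$ when $n\ge 3$ and $Y(x)=\log(1/|x|)$ when $n=2$; evaluating $U^\mu\le V$ at the origin gives $\mathbb{E}_{\mu/M}[Y]=U^\mu(0)/M\le V/M$, with equality iff $0$ is a regular point of $K$, and a direct computation with the ball shows $V/M=R^{-(n-2)}$ (respectively $\log(1/R)$), attained by $\nu$. Now choose $\phi$ so that $\phi(Y(x))=|x|^q$: explicitly $\phi(t)=t^{-q/(n-2)}$ for $n\ge 3$ and $\phi(t)=e^{-qt}$ for $n=2$. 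Then $M^{-1}\int_K|x|^q\,d\mu=\mathbb{E}_{\mu/M}[\phi(Y)]$ while $M^{-1}\int_B|x|^q\,d\nu=\phi(V/M)=R^q$, so the moment comparison becomes a Jensen inequality.

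The four regimes of $q$ correspond to regimes of convexity and monotonicity of $\phi$. For $q>0$, $\phi$ is convex and decreasing, so Jensen together with $\mathbb{E}[Y]\le V/M$ gives $\mathbb{E}[\phi(Y)]\ge\phi(\mathbb{E}[Y])\ge\phi(V/M)=R^q$, proving (i). For $-(n-2)<q<0$ (requiring $n\ge 3$), $\phi$ is concave and increasing, yielding the reversed inequality (iii). At $q=-(n-2)$ with $n\ge 3$, $\phi$ is linear and the conclusion is the Frostman bound itself. For $q<-(n-2)$ with $n\ge 3$, $\phi$ is convex and increasing, so Jensen closes only when $\mathbb{E}[Y]=V/M$, which is precisely the regularity hypothesis on $0$ in the statement. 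Part (ii) is either the $q\to 0^+$ limit of (i) (justified by dominated convergence and the mean value theorem) or a direct Jensen application with $\phi(t)=-\log(t)/(n-2)$ for $n\ge 3$, while for $n=2$ it reduces to $U^\mu(0)\le V$ after a sign change.

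For the equality characterization (iv), strict convexity or concavity of $\phi$ in Jensen forces $Y$ to be $\mu$-a.s.\ constant, so $\mu$ lives on a single centered sphere, and $\mathbb{E}[Y]=V/M$ pins its radius to $R$; thus $\mu=\nu$. Translating this measure-theoretic identity into the set-theoretic description ``$K\supset\partial B$ and $K\setminus B$ has inner $(n-2)$-capacity zero'' uses that for compact $K$ with $\capntwo(K)=\capntwo(B)$, missing any part of $\partial B$ strictly decreases the capacity while adding any subset of positive inner capacity outside $\overline{B}$ strictly increases it. In the linear cases (the log case for $n=2$ and $q=-(n-2)$ for $n\ge 3$), Jensen is automatically an identity and equality reduces exactly to $U^\mu(0)=V$, i.e.\ regularity of the origin. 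The main obstacle is precisely this last set-theoretic translation; the Jensen machinery itself is purely mechanical once the substitution $\phi(Y(x))=|x|^q$ is in place.
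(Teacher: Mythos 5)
Your proposal is correct and takes essentially the same route as the paper: evaluate the Frostman inequality $U^\mu(0)\le V_{n-2}(K)$ at the origin (with equality for the centered ball), substitute $Y=|x|^{-(n-2)}$ (resp.\ $\log(1/|x|)$ for $n=2$) and apply Jensen with $\phi(t)=t^{-q/(n-2)}$ (resp.\ $e^{-qt}$), tracking the convexity/concavity and monotonicity of $\phi$ across the four regimes of $q$. The cases, the role of the regularity hypothesis when $q<-(n-2)$, and the limiting log case all match the paper's argument.

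The one place your sketch is loose is the equality characterization in (iv), and it is worth being precise about the ``set-theoretic translation'' you flag as the main obstacle. Once Jensen equality plus strict monotonicity of $\phi$ force $|x|$ to be $\mu$-a.e.\ equal to $R$ and $U^\mu(0)=V_{n-2}(K)$, one deduces $\mu=\nu$ as follows: $\mu$ is a probability measure supported in $\partial B$, its energy equals $V_{n-2}(K)=V_{n-2}(B)=V_{n-2}(\partial B)$, so by uniqueness of the equilibrium measure of the sphere, $\mu=\nu$. Then $\partial B=\operatorname{supp}\nu=\operatorname{supp}\mu\subset K$, which gives $K\supset\partial B$ by a support argument rather than by a capacity-monotonicity claim; note that ``missing part of $\partial B$ decreases capacity'' is not literally correct for an arbitrary compact $K$ (a translated copy of $\partial B$ also misses part of $\partial B$ but has the same capacity). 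For the other half, the statement ``adding a set of positive inner capacity outside $\overline B$ strictly increases capacity'' is true but is not a formal monotonicity axiom: indeed strict monotonicity fails for $\partial B\subset B$. The paper proves it via the maximum principle (its Lemma on the potential lying below the energy): outside $B$ the potential of $\nu=\mu$ is strictly less than $V_{n-2}(B)=V_{n-2}(K)$, so every point of $K\setminus B$ is an irregular point of $K$ (and not interior to $K$), hence $K\setminus B$ is contained in the exceptional set, which has inner $(n-2)$-capacity zero. With that step supplied, your argument agrees with the paper's; the paper also argues a further step via the strong maximum principle to show $K\supset\partial B$, which is unnecessary once one first concludes $\mu=\nu$ by the energy/uniqueness route above.
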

A \emph{regular point} of $K$ is one at which the equilibrium potential equals the energy of the set, which indeed is the case at every point of $K$ except for a subset of inner capacity zero.

In part (i) for negative exponents $q<-(n-2)$, the moment $\int_K |x|^q \, d\mu$ on the left side of the inequality might equal $+\infty$ for some $K$. Also, that moment can be arbitrarily close to $0$ if the hypothesis that the origin belongs to $K$ is dropped, since then $K$ could lie far from the origin. On the other hand, the inequality in part (iii) goes in the reverse direction and so holds trivially when $K$ lies far from the origin and the left side is close to $0$. 

We prove the theorem in \autoref{sec:momentnewtonproof} by a straightforward application of Jensen's inequality. The proof yields stronger inequalities than are stated in the theorem, giving moment inequalities for $F(1/|x|^{n-2})$ when $F$ is convex decreasing. The crucial fact is that Newtonian equilibrium measure $\nu$ on a ball in $\Rn$ is simply normalized surface area measure on the boundary sphere, which means also that the lower bound in \autoref{th:momentnewton} evaluates to $\int_B |x|^q \, d\nu = R^q$, where $R$ is the radius of the ball $B$.

More difficult is the question of whether the ball minimizes equilibrium moments beyond the Newtonian case, that is, when the Riesz parameter is greater than $n-2$ and the equilibrium measure of the ball is not concentrated on the boundary sphere but instead distributes charges throughout the interior. The main result of the paper proves such moment inequalities when $p=n-1$.  
\begin{theorem}[Moments of $(n-1)$-equilibrium measure are minimal for the ball]  \label{th:momentnone}
Let $n \geq 1$ and suppose the compact set $K \subset \Rn$ has the same $(n-1)$-capacity as a closed ball $B \subset \Rn$ centered at the origin, meaning $\capnone(K)=\capnone(B)>0$. Then
\[
\int_K |x|^q \, d\mu \geq \int_B |x|^q \, d\nu , \qquad q \in (0,2] , 
\]
where $\mu$ and $\nu$ are the $(n-1)$-equilibrium measures of $K$ and $B$, respectively. Equality holds if and only if $K=B \cup Z$ where $Z$ has inner $(n-1)$-capacity zero. 

Further, the logarithmic moments satisfy 
\[
\int_K \log |x| \, d\mu \geq \int_B \log |x| \, d\nu .
\]
When $n \geq 2$, equality holds for the logarithmic moments if and only if $K=B \cup Z$ for some $Z$ with inner $(n-1)$-capacity zero. When $n=1$, equality holds if and only if the origin is a regular point of $K$ (meaning $\int_K \log 1/|x| \, d\mu = V_{log}(K)$). 
\end{theorem}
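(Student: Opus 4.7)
First, I would exploit the identification of the $(n-1)$-Riesz kernel in $\Rn$ with the Newtonian kernel in $\Rnp$: since $|x-y|^{-(n-1)}$ is precisely the Newtonian kernel in $n+1$ dimensions, the measures $\mu$ and $\nu$ may be regarded as Newtonian equilibrium measures of $K$ and $B$ viewed as compact subsets of $\Rnp$, with Newtonian potentials $U^\mu, U^\nu$ that satisfy Frostman's bounds $U^\mu, U^\nu \leq V$ with equality at regular points of $K$ and $B$ respectively. Under this identification, the capacity hypothesis $\capnone(K) = \capnone(B)$ forces the total masses $\mu(K) = \nu(B)$ to agree, so $U^\nu - U^\mu$ decays at infinity like $|x|^{-n}$ (the monopole contributions cancel).

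Next, I would apply Green's second identity in $\Rnp$ to the subharmonic test function $\Phi(x) = |x|^q$, whose Laplacian is $\Delta\Phi = q(q+n-1) |x|^{q-2} \geq 0$, paired with the difference $U^\nu - U^\mu$. A careful treatment of the boundary flux on large spheres $S_M \subset \Rnp$ --- the flux is $O(M^{q-1})$ and thus vanishes as $M \to \infty$ precisely when $q \leq 2$, which is where the hypothesis $q \in (0,2]$ enters --- should yield the key identity
\[
\int_K |x|^q \, d\mu - \int_B |x|^q \, d\nu = c_{n,q} \int_{\Rnp} |x|^{q-2} \bigl(U^\nu(x) - U^\mu(x)\bigr) dx,
\]
with a positive constant $c_{n,q}$. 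The logarithmic moment inequality is handled analogously using $\Delta \log|x| = (n-1)/|x|^2$ in $\Rnp$, which produces a weight $|x|^{-2}$ in place of $|x|^{q-2}$.

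The main obstacle is then showing that the right-hand integral is non-negative. Frostman's theorem supplies sign information: $U^\nu - U^\mu \geq 0$ on $B$ (where $U^\nu = V \geq U^\mu$) and $U^\nu - U^\mu \leq 0$ on $K \setminus B$ (where $U^\mu = V \geq U^\nu$), while the sign is not a priori controlled on $\Rnp \setminus (K \cup B)$. The weight $|x|^{q-2}$, being radial and nonincreasing for $q \in (0,2]$, places more mass on the origin-centered region containing $B$ than on the region $|x| > R$ containing $K \setminus B$, so heuristically the favorable contribution dominates. Rigorously establishing this requires more than a pointwise sign argument: a naive Fubini reduction collapses to a tautology, so the sharp inequality must come either from exploiting the explicit density $c(R^2 - |x|^2)^{-1/2}$ of $\nu$ on $B_R$ to write $U^\nu$ off $B$, or from a balayage argument viewing $\mu$ as the sweeping of $\nu$ onto $K$. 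The equality statements should then be read off by tracking when the Frostman bounds are saturated, with the separate case $n=1$ in the logarithmic moment corresponding exactly to regularity of the origin.
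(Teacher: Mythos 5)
Your starting point is exactly the paper's: identify the $(n-1)$-Riesz kernel on $\Rn$ with the Newtonian kernel on $\Rnp$ and compare the Newtonian potentials $u,v$ of $\mu,\nu$ there. The Green's-identity/representation step is also in the right spirit of the paper's \autoref{le:Phimoment}. Beyond that, though, there are two real problems.

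First, the flux estimate is wrong and the global integral does not exist as written. Since both $\mu$ and $\nu$ are probability measures, the monopole terms cancel and $v-u = O(|\xh|^{-n})$; the spherical mean of $v-u$ decays one order faster, $O(|\xh|^{-(n+1)})$, because the dipole term is odd. Carrying this through Green's identity gives a boundary flux on $S_M$ of size $O(M^{q-2})$, not $O(M^{q-1})$; this tends to zero only for $q<2$, not for $q\le 2$, and at $q=2$ the flux has a nonzero finite limit that your identity omits. Relatedly, $\int_{\Rnp}|\xh|^{q-2}\,(v-u)\,d\xh$ is only conditionally convergent (indeed the paper remarks explicitly that $\int_{\Rnp}(v-u)\,d\xh$ does not exist), so the representation must be kept in the form of a limit of integrals over balls $\B^{n+1}(R)$ together with a boundary term, as the paper does.

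Second and more fundamentally, the decisive positivity statement is missing. You correctly identify that the sign of $v-u$ is uncontrolled off $K\cup B$, and you correctly observe that a pointwise argument cannot close the gap, but you stop at ``this requires more than a pointwise sign argument.'' The paper's whole machinery exists precisely to bridge that gap. It integrates by parts in $r$ to reduce the $\Phi$-moment comparison (for $\Psi\ge0$, $\Psi'\le0$, which is what $q\in(0,2]$ buys you) to the single inequality $\int_{\B^{n+1}(r)}(v-u)\,d\xh\ge 0$ for every $r>0$, and then proves that by introducing the slice operator $Ju(r,z)=\int_{\B^n(r)}u(x,z)\,dx$, establishing the commutation relation $\deltastar J = J\Delta$, and running Baernstein's $\star$-function maximum-principle argument (including a Hopf-lemma step on the boundary slice $z=0$) to show $J(v-u)\ge0$ everywhere. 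Neither the explicit density of $\nu$ on $B$ nor a balayage viewpoint, the two routes you gesture at, is used or obviously sufficient; the core of the proof is this two-variable elliptic comparison in the $(r,z)$ quadrant, which your proposal does not contain.
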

The theorem is proved in \autoref{sec:J}--\autoref{sec:momentnoneproof} by suitably adapting Baernstein's $\star$-function method from complex analysis. The set-up for the proof is shown in \autoref{fig:K}. Note that the lower bounds in the theorem can be computed explicitly, by means of the known formula (\autoref{sec:background}) for the equilibrium measure $\nu$ of the ball.  

The Riesz capacity in this theorem with $p=n-1$ is simply Newtonian capacity for $\Rnp$, since the Newtonian case arises when $p$ equals the dimension minus $2$. Hence \autoref{th:momentnone} can be interpreted as saying that among compact sets in $\Rn$ having given Newtonian capacity when regarded as subsets of $\Rnp$, the $q$-th moment of equilibrium measure is minimal for the $n$-ball, when $0<q \leq 2$. 

\begin{figure}
\begin{center}
\begin{tikzpicture}[x=0.5cm,y=0.5cm,z=0.3cm,>=stealth,
]
 \filldraw[gray!60] plot[smooth,samples=36,domain=0:360,variable=\t] ({1.1*(1+0.08*cos(6*\t)+0.05*sin(8*\t)+0.08*cos(9*\t))*(4.5*cos(\t))},{1.1*(1+0.08*cos(6*\t)+0.05*sin(8*\t)+0.08*cos(9*\t))*(sin(\t))});
 \draw plot[smooth,samples=36,domain=0:360,variable=\t] ({1.1*(1+0.08*cos(6*\t)+0.05*sin(8*\t)+0.08*cos(9*\t))*(4.5*cos(\t))},{1.1*(1+0.08*cos(6*\t)+0.05*sin(8*\t)+0.08*cos(9*\t))*(sin(\t))});
 \draw (3,-1.4) node[right] {$K$};
\filldraw  (1.9,0.5) circle (1.5pt) node[right] {$x$};
\draw[-] (xyz cs:x=-6) -- (xyz cs:x=6.1) node[right] {$\Rn$};
\draw[-] (xyz cs:y=-4) -- (xyz cs:y=6) node[above] {$\R$};
\draw[-] (xyz cs:z=-5.25) -- (xyz cs:z=4.25);
\draw (0,4.4) ellipse (1.7cm and 0.4cm);
\draw[-] (-0.1,4.4) -- (0.2,4.4) ;
\draw (0,4.4)  node[left] {$z$};
\draw[-] (0,4.4) -- (3.37,4.4);
\draw (1.5,4.4-0.1) node[above] {$r$};
\end{tikzpicture}
\end{center}
\caption{The compact set $K$ in \autoref{th:momentnone} lies in $\Rn$. For the proof, we regard $K$ as lying in the higher dimensional space $\Rnp$ and proceed by integrating the (harmonic) equilibrium potential over the $n$-dimensional ball at height $z$ with radius $r$. 
\label{fig:K}
}
\end{figure}
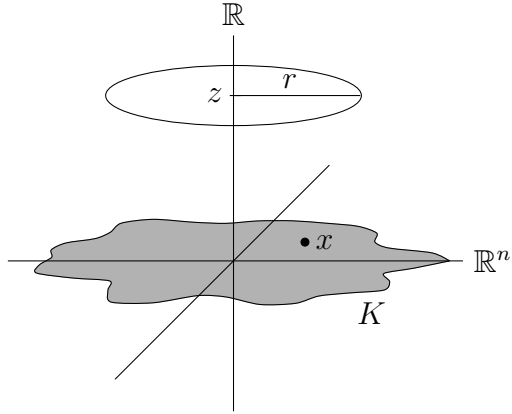

This interpretation is particularly informative when $n=1$ and $n=2$. 
\begin{corollary}[Moments of planar sets]  \label{co:momentplanar} Let $0 < q \leq 2$. 

If a compact set $K \subset \R^2$ has the same Newtonian capacity as a closed disk $B$ centered at the origin, $\capone(K)=\capone(B)$, then $\int_K |x|^q \, d\mu \geq \int_B |x|^q \, d\nu$ where $\mu$ and $\nu$ are the Newtonian equilibrium measures of $K$ and $B$ regarded as sets in $\R^3$. 

If a compact set $K \subset \R$ has the same logarithmic capacity as a closed interval $I$ centered at the origin, $\capzero(K)=\capzero(I)$, then $\int_K |x|^q \, d\mu \geq \int_I |x|^q \, d\nu$ where $\mu$ and $\nu$ are the logarithmic equilibrium measures of $K$ and $I$ regarded as sets in $\R^2$. 
\end{corollary}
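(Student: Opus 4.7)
The plan is to obtain \autoref{co:momentplanar} as two immediate specializations of \autoref{th:momentnone}, using the identification of $(n-1)$-Riesz capacity on $\Rn$ with Newtonian capacity on $\Rnp$ that was already recorded in the paragraph following the theorem. No new ideas beyond this dictionary should be needed.

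For the first statement I would take $n=2$ in \autoref{th:momentnone}. Then $(n-1)$-capacity becomes $1$-capacity on $\R^2$, which coincides with Newtonian capacity on $\R^3$; the hypothesis $\capone(K)=\capone(B)$ thus matches exactly the hypothesis of the theorem at $n=2$, and the measures $\mu$ and $\nu$ named in the corollary are by definition the $1$-equilibrium measures appearing in the theorem, i.e.\ the Newtonian equilibrium measures of $K$ and $B$ regarded as subsets of $\R^3$. The inequality $\int_K |x|^q\,d\mu \geq \int_B |x|^q\,d\nu$ for $0<q\leq 2$ is then the direct conclusion of the theorem.

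For the second statement I would take $n=1$. Now $(n-1)$-capacity is $0$-capacity, i.e.\ logarithmic capacity on $\R$, and the Newtonian case in dimension $n+1=2$ is by definition logarithmic capacity. Hence the hypothesis $\capzero(K)=\capzero(I)$ for a compact set $K \subset \R$ and a centered interval $I$ is the $n=1$ instance of the theorem's hypothesis, the measures $\mu$ and $\nu$ are the planar logarithmic equilibrium measures of $K$ and $I$, and the moment inequality follows at once from \autoref{th:momentnone}.

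Since the corollary is essentially a change of viewpoint rather than a new result, I do not anticipate any genuine obstacle: the only thing to verify is that the Riesz parameter $p=n-1$ on $\Rn$ lines up with the Newtonian exponent in $\Rnp$ for the two special values $n=1,2$, and that the equilibrium measures referenced in each formulation are literally the same objects. Both checks are straightforward translations of notation, so the proof should be a one-line invocation of \autoref{th:momentnone} in each of the two cases.
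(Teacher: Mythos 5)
Your proposal is correct and is exactly how the paper justifies the corollary: the paragraph preceding it identifies Riesz $(n-1)$-capacity on $\Rn$ with Newtonian (or, when $n=1$, logarithmic) capacity on $\Rnp$, and the corollary is then just the $n=2$ and $n=1$ instances of \autoref{th:momentnone}. The only minor slip is calling the $p=0$ case in $\R^2$ ``Newtonian''; the paper reserves that term for $p=m-2$ with $m\geq 3$ and uses ``logarithmic'' for $m=2$, but this does not affect the argument.
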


Does the last theorem remain true for a whole range of Riesz capacities and moments? 
\begin{conjecture}[Moments of Riesz $p$-equilibrium measure are minimal for the ball] \label{co:moments}
Suppose $n \geq 2$ and $n-2 < p < n$, or else $n=1$ and $0 \leq p < 1$. If a compact set $K \subset \Rn$ and closed ball $B \subset \Rn$ centered at the origin have the same $p$-capacity, $\capp(K)=\capp(B)$, then
\[
\int_K |x|^q \, d\mu \geq \int_B |x|^q \, d\nu , \qquad q \in (0,\infty), 
\]
where $\mu$ and $\nu$ are the $p$-equilibrium measures of $K$ and $B$, respectively. 
\end{conjecture}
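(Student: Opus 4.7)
The plan is to adapt the $\s$-function method behind \autoref{th:momentnone} to the full range $n-2 < p < n$ by replacing the integer dimension jump $\Rn \hookrightarrow \Rnp$ with a Caffarelli--Silvestre-type extension to a weighted half-space. Setting $a = p-n+1 \in (-1,1)$, a direct computation shows that
\[
\tilde U^\mu(x,z) = \int_K \bigl(|x-y|^2 + z^2\bigr)^{-p/2} \, d\mu(y) , \qquad (x,z) \in \Rn \times (0,\infty) ,
\]
satisfies the degenerate-elliptic equation $\operatorname{div}(z^a \nabla \tilde U^\mu) = 0$ off $K \times \{0\}$ and reduces to the Riesz $p$-equilibrium potential $U^\mu(x)$ when $z=0$. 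The special case $p=n-1$ forces $a=0$, recovering exactly the harmonic set-up in $\Rnp$ used to prove \autoref{th:momentnone}, and the case $n=1,\ p=0$ is covered by the same specialization.

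Next, following the template of that proof, I would integrate $\tilde U^\mu$ over the $n$-dimensional disk of radius $r$ at height $z$ (the disk in \autoref{fig:K}) to produce a two-variable auxiliary function $\Phi^\mu(z,r)$, and then define a Baernstein-style $\s$-function $\Phi^{\mu,\s}$ by cap-rearranging $\tilde U^\mu$ in the $\Rn$ slice while keeping $z$ fixed. The $L_a$-harmonicity of $\tilde U^\mu$ should yield a sub-$L_a$-harmonic property for the difference $\Phi^{\mu,\s} - \Phi^{\nu,\s}$ in the open quadrant $\{z > 0,\, r > 0\}$. Given that, I would verify nonnegativity of the difference on each boundary piece of the quadrant, using the hypothesis $\capp(K) = \capp(B)$ to match Riesz energies and the axial symmetry of $\nu$ along $r = 0$, then apply the weighted maximum principle, and finally recover $\int_K |x|^q \, d\mu \geq \int_B |x|^q \, d\nu$ by integrating the resulting $\s$-inequality against the layer-cake representation of $|x|^q$, just as in \autoref{sec:momentnoneproof}.

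The main obstacle is establishing the weighted sub-$L_a$-harmonicity of $\Phi^{\mu,\s}$. When $a = 0$ the cap rearrangement commutes with the Laplacian at the level of the sub-mean-value property because the vertical direction enters symmetrically, but for $a \neq 0$ the weight $z^a$ breaks that symmetry, and one must develop a Baernstein-type rearrangement inequality adapted to the Bessel-type operator $\partial_z^2 + (a/z) \partial_z + \Delta_x$; this is the genuinely new analytic input required beyond \autoref{th:momentnone}. A subsidiary difficulty is the simultaneous singularity of $\tilde U^\mu$ and degeneracy of the weight near $K \times \{0\}$, which should be controlled via Caffarelli--Silvestre trace theory together with an approximation from the interior of the half-space, together with a regular-point assumption at the origin in the same spirit as in \autoref{th:momentnewton}(i). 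The endpoint $p \to n-2$ recovers \autoref{th:momentnewton} via Jensen's inequality, while $p \to n$ is the singular limit $s \to 0$ at which the extension degenerates and Riesz equilibrium measures cease to exist in the usual sense, consistent with the strict inequalities bounding the conjectured range.
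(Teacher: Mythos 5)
This statement is a \emph{conjecture} in the paper; the authors explicitly offer no proof, noting only that the cases $p=n-2$ (all $q>0$) and $p=n-1$ (only $0<q\le 2$) are settled by Theorems \ref{th:momentnewton} and \ref{th:momentnone}, and that the conjecture holds in the limit $q\to\infty$. So there is no paper proof for you to reproduce, and what you have written is a research program rather than a proof.

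Your opening step is mathematically correct: with $a=p-n+1\in(-1,1)$ the extension $\tilde U^\mu(x,z)=\int_K(|x-y|^2+z^2)^{-p/2}\,d\mu(y)$ does satisfy $\operatorname{div}(z^a\nabla\tilde U^\mu)=0$ away from $K\times\{0\}$ and restricts to the Riesz $p$-potential on $z=0$; this is the natural Caffarelli--Silvestre generalization of the paper's device of viewing $K\subset\Rn$ inside $\Rnp$, which is precisely the $a=0$ case. However, the subsequent plan has two genuine gaps. First, as you yourself flag, you have not proved (or even formulated precisely) the weighted Baernstein rearrangement inequality for the Bessel-type operator $\partial_z^2+(a/z)\partial_z+\Delta_x$ that your argument hinges on; for $a\ne 0$ the paper's crucial shortcut --- that $\tilde U^\mu$ is genuinely harmonic in $\{z>0\}$, so one may apply $J$ directly and avoid rearrangement altogether (see the remark after Step 4 in the proof of Proposition \ref{pr:JvJuPos}) --- is not available, and no such rearrangement inequality is established here or in the paper. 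Without it, the maximum-principle machinery never gets off the ground.

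Second, even granting the rearrangement input, your claim to obtain the inequality for all $q\in(0,\infty)$ is unjustified and conflicts with the evidence in the paper itself. In the $a=0$ case the authors only reach $q\in(0,2]$ because the passage from $J(v-u)\ge 0$ to the moment inequality uses the representation in Lemma \ref{le:Phimoment}, whose second term carries the factor $-\Psi'(r)$ with $\Psi(r)=q(q+n-1)r^{q-2}$, and $\Psi'\le 0$ fails once $q>2$. Your ``layer-cake'' step would meet the same sign obstruction, and nothing in the proposal explains how a cap-rearranged $\s$-inequality would yield a strictly stronger conclusion that overcomes it. Finally, the suggested ``regular-point assumption at the origin'' is not part of the conjecture's hypotheses and would weaken the statement; in Theorem \ref{th:momentnewton} that hypothesis is invoked only for negative exponents $q<-(n-2)$, which lie outside the conjectured range $q>0$.
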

This conjecture is true in the limit $q \to \infty$, since if one takes the $q$-th root of the moment inequality and lets $q \to \infty$ then it says $\lVert x \rVert_{L^\infty(\mu)} \geq \lVert x \rVert_{L^\infty(\nu)}$, which must hold because otherwise $\mu$ would be supported in a ball strictly contained in $B$, contradicting the assumption that $K$ and $B$ have the same capacity. 

The case $p=n-2$ of the conjecture was handled in \autoref{th:momentnewton}, while \autoref{th:momentnone} covers $p=n-1$ with $0 < q \leq 2$. 

When $p<n-2$, the conjecture might still be plausible. Our proof in \autoref{sec:momentnewtonproof} for $p=n-2$ does not extend to $p<n-2$ because the equilibrium potential of $K$ at the origin can now have value greater than the energy $V_p(K)$ (see \cite[p.{\,}163]{L72}) and so inequality \eqref{eq:potentialineq} in the proof can fail. 

However, the fact that the equilibrium measure of the ball is supported on the boundary when $p<n-2$ can be used to show that if the moment inequality is true for some moment, then it holds also for all higher moments. 
\begin{proposition}[Range of moments for $p$-equilibrium measures with $0<p<n-2$]\label{pr:momentsthreshold}
Let $n \geq 3$ and $0<p<n-2$. Suppose the compact set $K \subset \Rn$ and closed ball $B \subset \Rn$ centered at the origin have the same $p$-capacity, $\capp(K)=\capp(B)>0$. If the equilibrium moment inequality
\begin{equation} \label{eq:momentsthreshold}
\int_K |x|^q \, d\mu \geq \int_B |x|^q \, d\nu
\end{equation}
holds for some $q=q_* \in (0,\infty)$ then it holds for all $q  \in [ q_*,\infty)$.
\end{proposition}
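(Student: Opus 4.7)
The plan is to use the fact, noted just above the statement, that when $0<p<n-2$ the $p$-equilibrium measure $\nu$ of the centered ball $B$ is concentrated on the boundary sphere $\{|x|=R\}$; granted this, the proposition is a one-line consequence of the Lyapunov (power-mean) inequality.

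First, I would record the normalizations: since $\capp(K)=\capp(B)>0$, the total masses agree, $\mu(K)=\nu(B)=:M>0$, and boundary concentration of $\nu$ gives
\[
\int_B |x|^q\, d\nu \; = \; R^q M \qquad \text{for every } q\in\R,
\]
where $R$ is the radius of $B$. Hence the hypothesis \eqref{eq:momentsthreshold} at $q=q_*$ reads
\[
\left( \frac{1}{M} \int_K |x|^{q_*}\, d\mu \right)^{\! 1/q_*} \; \geq \; R.
\]

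Next I would invoke the power-mean (Lyapunov) inequality on the probability space $(K, M^{-1}\mu)$ applied to the nonnegative function $x\mapsto |x|$: the map
\[
q \;\longmapsto\; \left( \frac{1}{M} \int_K |x|^q\, d\mu \right)^{\! 1/q}
\]
is non-decreasing for $q>0$ (all moments are finite because $K$ is compact). Combined with the displayed lower bound at $q_*$, this yields for every $q \geq q_*$
\[
\left( \frac{1}{M} \int_K |x|^q\, d\mu \right)^{\! 1/q} \; \geq \; R,
\]
and raising to the $q$-th power gives \eqref{eq:momentsthreshold} at the exponent $q$.

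There is no real obstacle here: the substantive input is the classical boundary concentration of $\nu$ when $p<n-2$, already invoked in the paper, after which the argument is wholly elementary. What the proposition does \emph{not} do, and what remains the genuine difficulty behind \autoref{co:moments}, is to establish \eqref{eq:momentsthreshold} at any single threshold value $q_*$ when $p<n-2$; once such a threshold is in hand, the above argument propagates the inequality freely to all larger exponents.
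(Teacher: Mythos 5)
Your argument is correct and is essentially the paper's proof: the power-mean (Lyapunov) inequality you invoke is precisely Jensen's inequality applied to the convex function $s\mapsto s^{q/q_*}$, which is how the paper phrases it, and both proofs then use boundary concentration of $\nu$ to evaluate the right-hand side. (One small remark: in the paper's convention $\mu$ and $\nu$ are probability measures by definition, so $M=1$ automatically; it is not a consequence of the equal-capacity hypothesis.)
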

\autoref{sec:threshold} has the proof. Some such threshold $q_*=q_*(K,n,p)$ exists, since the moment inequality is true in the limit $q\to \infty$, as observed above. What one would like to show is the existence of a threshold $q_*$ that does not depend on the set $K$.

\subsection{\bf Definitions of energy, capacity, and equilibrium measure}
\label{sec:capacities}

References for the following definitions and facts can be found in \autoref{sec:background}. 

Consider a compact subset $K$ in $\R^n, n \geq 1$. 
The \textbf{logarithmic energy} of $K$ is
\[
V_{log}(K) = \inf_\mu \int_K \! \int_K \log \frac{1}{|x-y|} \, d\mu(x) d\mu(y)
\]
where the infimum is taken over all probability measures on $K$, that is, positive unit Borel measures. (Such measures always exist unless $K$ is empty, and for the empty set we define $V_{log}(\emptyset)=+\infty$.) The energy is greater than $-\infty$ since $|x-y|$ is bounded. The infimum is attained, and if the energy is less than $+\infty$ then it is attained by a unique minimizing measure called the logarithmic equilibrium measure. 

The \textbf{Riesz $p$-energy} of $K$ is
\[
V_p(K) = 
\inf_\mu \int_K \! \int_K \frac{1}{|x-y|^p} \, d\mu(x) d\mu(y) , \qquad p>0 ,
\]
where  the infimum is taken over all probability measures on $K$. Again the infimum is attained, and if the energy is finite then the minimizing measure is unique and we call it the $p$-equilibrium measure. The \textbf{Riesz $p$-capacity} is 
\[
\capp(K) = 
\begin{cases}
V_p(K)^{-1/p} , & p > 0 , \\
\exp(-V_{log}(K)) , & p = 0 .
\end{cases}
\]
This capacity is positive if and only if the energy is finite. The $0$-capacity is also known as \textbf{logarithmic capacity}, and the logarithmic equilibrium measure will be called the $0$-equilibrium measure.  

We say a set $Z$ has \emph{inner $p$-capacity zero} if $\capp(Z^\prime)=0$ for every compact $Z^\prime \subset Z$. 

\textbf{Newtonian energy} is  $V_{n-2}(K)$ and \textbf{Newtonian capacity} is $\capntwo(K)$. That is, the Newtonian case arises when 
\[
n \geq 3 \quad \text{and} \quad p=n-2. 
\]
Capacity formulas for the ball are collected in \autoref{sec:background}. 

Note that capacity is monotonic with respect to set inclusion, with 
\[
K_1 \subset K_2 \quad \Longrightarrow \quad \capp(K_1) \leq \capp(K_2) ,
\]
because the larger set supports a larger collection of probability measures and hence has smaller energy. 

Some authors, such as Landkof \cite{L72} or Borodachov, Hardin and Saff \cite{BHS19}, define $\capp(K)$ to be $1/V_p(K)$ without taking the $p$-th root. The current definition is more convenient for our purposes since it ensures that capacity scales linearly: 
\[
\capp(sK)=s \capp(K) , \qquad s>0 .
\]

\subsection{Prior work on moments of equilibrium measure}

Only a few extremal results on moments of equilibrium measure are known to us in the literature, as summarized below. None overlap directly with the results in this paper. 

For logarithmic equilibrium measure in the plane, Baernstein, Laugesen and Pritsker \cite[Theorem 1]{BLP11} prove an inequality somewhat related our to \autoref{th:momentnone} when $n=1$, for compact $K \subset \R$. They show that if $K$ is translated to put its electrostatic centroid at the origin ($\int_K x \, d\mu=0$) and $I$ is a centered interval with the same logarithmic capacity as $K$, then the lower bound 
\[
\int_K \phi(x) \, d\mu \geq \int_I \phi(x) \, d\nu
\]
holds whenever $\phi$ is convex. Note that here the set $K$ is centered and the convex function $\phi$ is not, whereas in our \autoref{th:momentnone} the moment function $|x|^q$ is centered but $K$ is not. 

Those authors also establish upper bounds, which naturally are of a different nature to the lower bounds in our paper. In particular they show in \cite[Theorem 2]{BLP11} that if $K \subset \R^2 \simeq \C$ is connected and has electrostatic centroid at the origin then 
\[
\int_K \phi(\Real z) \, d\mu(z) \leq \int_I \phi(\Real z) \, d\nu(z) .
\]
This result says intuitively that among connected planar sets of given capacity, the horizontal line segment is the ``most spread out'' as quantified by convex means of the horizontal variable.  

A further upper bound due to Laugesen \cite[Corollary 6]{L93} implies that if $K \subset \C$ is connected and contains the origin and has the same logarithmic capacity as an interval $I_0$ having one endpoint at the origin, then 
\[
\int_K \phi(\log |z|) \, d\mu \leq \int_{I_0} \phi(\log |z|) \, d\nu
\]
whenever $\phi$ is convex. Baernstein, Laugesen and Pritsker \cite[Corollary 6.3]{BLP11} deduce from this inequality that if $K \subset \C$ is connected and symmetric with respect to the origin and contains the origin, then 
\begin{equation} \label{eq:BLPconj}
\int_K \phi(\log |z|) \, d\mu \leq \int_I \phi(\log |z|) \, d\nu ,
\end{equation}
where this last integral is taken not over $I_0$ but over the centered interval $I$. 

They raise an open problem for logarithmic equilibrium moments \cite[Conjecture 2]{BLP11}: if $K \subset \C$ is connected, contains the origin, and has its electrostatic centroid at the origin, then \eqref{eq:BLPconj} holds whenever $\phi$ and $\phi^\prime$ are convex. A special case would be a long-standing conjecture of Pommerenke for conformal maps on the exterior of the disk. 

Lastly, upper bounds for $\log^+$ moments on Julia sets are presented by Pritsker \cite{P23}, with the maximizers being intervals in the plane. 

\subsection{Literature on isoperimetric theorems for Riesz capacity} Our work on minimizing moments of equilibrium measures is indirectly motivated by a classic extremal problem in mathematical physics: to minimize capacity among conductors of given volume. In the Newtonian and logarithmic situations ($p=n-2$), the Poincar\'{e}--Carleman--Szeg\H{o} theorem \cite{PS51} asserts that the set of given volume that minimizes capacity is the ball. In the plane, Solynin and Zalgaller \cite{SZ04} proved a beautiful and difficult polygonal analogue, that among $N$-sided polygons with given area, the one that minimizes logarithmic capacity is the regular $N$-gon. In a related vein, although much easier to prove, Laugesen \cite{L22} minimized capacity among linear images of sets (not necessarily polygons) that have rotational symmetry.  

Minimality of Riesz $p$-capacity for the ball among compact sets of given volume is known when $n-2<p<n$ by Watanabe \cite[p.{\,}489]{W83}; see also Betsakos \cite{B04a,B04b} and M\'{e}ndez--Hern\'{a}ndez \cite{MH06}. Rearrangement and polarization methods underlie these results, along with probabilistic characterizations of the capacity (related to $\alpha$-stable processes). The case $0<p<n-2$ remains open, as far as we know. For background, see the papers mentioned above and the comment by Mattila \cite[p.{\,}193]{M90}. A probabilistic approach could perhaps be tried to tackle \autoref{co:moments} for moments of Riesz equilibrium measures.

\section{\bf Proof of \autoref{th:momentnewton}}
\label{sec:momentnewtonproof}

The equilibrium measure $\nu$ for the ball $B$ is normalized surface area measure on the sphere $\partial B$, since  $p=n-2$ in this theorem; see \autoref{sec:background}.

\subsection*{Part (i).} Assume to begin with that $n \geq 3$. Recall that the equilibrium potential is bounded above everywhere by the energy (\autoref{upperu}), which at the origin means  
\begin{equation} \label{eq:potentialineq}
\int_K |x|^{-{(n-2)}} \, d\mu \leq V_{n-2}(K) ;
\end{equation}
equality holds for the centered ball $B$ since the origin is an interior point. 

Suppose $q>0$. Let $F(s)=s^{-q/(n-2)}$, so that $F$ is strictly convex and decreasing and has $|x|^q = F(1/|x|^{n-2})$. Then
\begin{align*}
\int_K |x|^q \, d\mu 
& = \int_K F(|x|^{-(n-2)}) \, d\mu \\
& \geq F \! \left( \int_K |x|^{-(n-2)} \, d\mu \right) && \text{by Jensen's inequality since $F$ is convex} \\
& \geq F \! \left( V_{n-2}(K) \right) && \text{by \eqref{eq:potentialineq} since $F$ is decreasing} \\
& = F \! \left( V_{n-2}(B) \right) && \text{because $K$ and $B$ have the same energy} \\
& = F \! \left( \int_B |x|^{-(n-2)} \, d\nu \right) && \text{by equality in \eqref{eq:potentialineq} for the ball} \\
& = \int_B |x|^q \, d\nu ,
\end{align*}
using in the final step that $|x|$ is constant on the sphere where $\nu$ is supported. 

Now suppose $q < -(n-2)$ and that the origin is a regular point of $K$, which means simply that \eqref{eq:potentialineq} holds with equality. Again $F$ is strictly convex, and even though it is increasing, the argument above remains valid since \eqref{eq:potentialineq} now holds with equality. 

Assume next that $n=2$. The potential satisfies the logarithmic analogue of \eqref{eq:potentialineq}, which is $\int_K \log 1/|x| \, d\mu \leq V_{log}(K)$, with equality when $K$ is the disk $B$ centered at the origin. We may argue as above using the strictly convex function $F(s)=e^{-q s}$, which is decreasing when $q>0$. The key relation is that $|x|^q = F(\log 1/|x|)$.

\subsection*{Part (ii).} For logarithmic moments, argue as for $q>0$ in part (i), except using the strictly convex, decreasing function $F(s) = \log 1/s$ when $n \geq 3$ and the linear decreasing function $F(s)=-s$ when $n=2$. The proof goes through as before, using that $F(1/|x|^{n-2}) = (n-2) \log |x|$ when $n \geq 3$ and $F(\log 1/|x|) = \log |x|$ when $n=2$. 

Alternatively, one could obtain part (ii) from part (i) by letting $q \searrow 0$ and obtaining the logarithm as an endpoint derivative, since $(|x|^q-1)/q \to \log |x|$ as $q \to 0$.

\subsection*{Part (iii).} Suppose $-(n-2)<q<0$. Argue as for $q>0$ in part (i), but now using the strictly convex, decreasing function $F(s)=-s^{-q/(n-2)}$, for which $-|x|^q = F(1/|x|^{n-2})$. One obtains that $-\int_K |x|^q \, d\mu \geq -\int_B |x|^q \, d\nu$. The same inequality holds when $q=-(n-2)<0$, by applying the potential inequality \eqref{eq:potentialineq} to $K$ and recalling that \eqref{eq:potentialineq} holds with equality for $B$. 

\subsection*{Part (iv).} When $q=-(n-2)$ with $n \geq 3$ we conclude directly from \eqref{eq:potentialineq} that equality holds in case (iii) if and only if the origin is a regular point of $K$, and similarly when $n=2$ we conclude from the logarithmic analogue of \eqref{eq:potentialineq} that equality holds in case (ii) if and only if the origin is a regular point of $K$. 

Next we prove the other equality statements.

``$\Longrightarrow$'' Suppose equality holds in part (i). To begin with, suppose $n \geq 3$. Examining the argument above, we notice equality must hold in Jensen's inequality. Strict convexity of $F$ then requires that $|x|$ is constant $\mu$-a.e.\ on $K$. Writing $R$ for that constant value, it follows that $\mu$ is supported in the sphere of radius $R$ centered at the origin. Equality of the moments in part (i) implies that $R$ is the radius of $B$, so that $\mu$ is supported in $\partial B$.

Further, since $F$ is strictly decreasing when $q>0$, equality in the proof of part (i) implies that $\int_K |x|^{-(n-2)} \, d\mu=V_{n-2}(K)$, and when $q<-(n-2)$ the same equation holds by hypothesis. Thus, either way, at the origin the equilibrium potential of $K$ achieves its maximum value, namely the energy of $K$. We claim $K$ contains the whole sphere $\partial B$. For if it omitted any point of the sphere then it would omit a whole neighborhood of that point and hence the support of $\mu$ would be confined to a proper compact subset $S$ of $\partial B$. The equilibrium potential of $K$ would then be harmonic and nonconstant on the connected open set $\Rn \setminus S$, and so by the strong maximum principle it would be impossible for the equilibrium potential to attain its maximum at the origin, because the origin is an interior point of that open set. This contradiction shows that $K$ must contain the whole sphere $\partial B$. 

Since $\mu$ is supported in $\partial B$, we see that $\mu$ must be the equilibrium measure for that sphere, which is  the normalized surface area measure. Thus $\mu=\nu$. Outside the ball $B$, the potential of $\nu$ is strictly smaller than the energy of $B$ by \autoref{upperu}. Equivalently, outside $B$ the potential of $\mu$ is strictly less than the energy of $K$. By the equality case of \autoref{upperu}, we conclude that $K \setminus B$ contains no interior points of $K$ and that it has inner $(n-2)$-capacity zero. Hence we have proved the desired equality statement. 

When $n=2$, the argument above for equality in case (i) is identical except using logarithmic energies, capacities and potentials. 

If equality holds in part (ii) with $n \geq 3$, or in part (iii) with $-(n-2) < q < 0$, then the arguments above again imply by the strict convexity of $F$ that $K$ contains the sphere $\partial B$ and $K \setminus B$ has inner $(n-2)$-capacity zero. 

``$\Longleftarrow$'' For the other direction of these equality statements, suppose $K \setminus B$ has inner $(n-2)$-capacity zero. If $K^\prime$ is any compact subset of $K \setminus B$ then $\capntwo(K^
\prime)=0$, and so $K^\prime$ has infinite energy, which implies $\mu(K^\prime)=0$ (because otherwise the normalized restriction of $\mu$ to $K^\prime$ would give finite energy). Hence $\mu(K^
\prime)=0$ for every compact subset of $\Rn \setminus B$, and so $\mu(\Rn \setminus B)=0$. Thus $\mu$ is supported in $B$, and since $K$ and $B$ are assumed to have the same energy, the uniqueness of equilibrium measure forces $\mu=\nu$. The moments of $\mu$ and $\nu$ therefore agree.

\section{\bf Proof of \autoref{pr:momentsthreshold}}
\label{sec:threshold}

The function $F(s)=s^{q/{q_*}}$ is convex since $q\geq q_*$ and so Jensen's inequality yields  
\[
\int_K |x|^q \, d\mu \geq \left(\int_K |x|^{q_*}\, d\mu \right)^{\! \! q/q_*} .
\] 
Since the moment inequality \eqref{eq:momentsthreshold} holds for $q_*$, we see that 
\[
\left(\int_K |x|^{q_*} \, d\mu \right)^{\! \! q/q_*} \geq \left(\int_B |x|^{q_*} \, d\nu \right)^{\! q/q_*} = \int_B |x|^q \, d\nu ,
\]
where we used in the final equality that for $0<p<n-2$, the Riesz equilibrium measure $\nu$ on the ball $B$ is normalized surface measure on the boundary sphere \cite[Theorem 4.6.7]{BHS19}, so that $|x|$ is constant on the support of $\nu$.

\section{\bf Moment formula in $\Rnp$} 
The moments of equilibrium measure when $p=n-1$ will be expressed in terms of the equilibrium potential in $\Rnp$, in \autoref{le:Phimoment} below. First we need the mean value of the Newtonian potential near infinity, in $n+1$ dimensions.
\begin{lemma}[Spherical mean value of the potential near infinity] \label{le:meanvalue}
If the compact set $K \subset \Rnp$ has positive $(n-1)$-capacity then its equilibrium potential $u$ satisfies  
\[
\frac{1}{|\Sphn|} \int_{\Sphn} u(r\xi) \, d\xi = 
\begin{cases}
1/r^{n-1} , & n \geq 2 , \\
\log 1/r , & n=1 ,
\end{cases}
\]
whenever $r$ is large enough that $K \subset \B^{n+1}(r)$. 
\end{lemma}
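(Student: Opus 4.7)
The plan is to reduce the spherical mean of $u$ to the classical Newton (or Gauss) shell theorem, applied in $\Rnp$. First I would expand the $(n-1)$-equilibrium potential as
\[
u(x) = \int_K k(x,y)\, d\mu(y),
\]
where $\mu$ is the $(n-1)$-equilibrium measure of $K$ and the kernel is $k(x,y) = |x-y|^{-(n-1)}$ when $n \geq 2$ and $k(x,y) = \log(1/|x-y|)$ when $n=1$. The key observation is that $n-1 = (n+1)-2$, so these kernels coincide with the Newtonian and two-dimensional logarithmic kernels on $\Rnp$; in particular, for each fixed source point $y$, the kernel $k(\cdot,y)$ is harmonic on $\Rnp \setminus \{y\}$.

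Fubini's theorem then rewrites the spherical mean as
\[
\frac{1}{|\Sphn|}\int_{\Sphn} u(r\xi)\, d\xi = \int_K F(y)\, d\mu(y), \qquad F(y) := \frac{1}{|\Sphn|}\int_{\Sphn} k(r\xi,y)\, d\xi.
\]
Absolute integrability is not an issue: the singularity of $k$ has order at most $n-1$, strictly less than the dimension $n$ of $\Sphn$, and $K$ is compact.

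The heart of the proof is the evaluation $F(y) = 1/r^{n-1}$ (respectively $\log(1/r)$) whenever $|y| \leq r$, which is the classical shell theorem. I would derive it by noting that $F$ is rotationally invariant in $y$, hence a radial function, and that on $\{|y| < r\}$ it is harmonic, obtained by differentiating under the integral and invoking harmonicity of $k(r\xi,\cdot)$ uniformly in $\xi$. A radial harmonic function on a ball in $\Rnp$ takes the form $a + b|y|^{-(n-1)}$ when $n \geq 2$ and $a + b\log|y|$ when $n=1$; boundedness near the origin forces $b=0$, so $F$ is constant on $\{|y|<r\}$ with value $F(0)$. Direct evaluation at $y=0$ then yields $F(0) = 1/r^{n-1}$ for $n \geq 2$ and $F(0) = \log(1/r)$ for $n=1$. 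Extending to the boundary $|y|=r$ is an immediate continuity argument via dominated convergence. Since $K \subset \B^{n+1}(r)$, every $y \in K$ satisfies $|y| \leq r$, so $F$ is constant on the support of $\mu$, and integrating the probability measure $\mu$ yields the claimed identity.

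There is no real obstacle here — Fubini and differentiation under the integral are both routine — but the conceptual point worth emphasizing is the dimensional shift: interpreting the $(n-1)$-Riesz kernel on $\Rn$ as a Newtonian kernel on $\Rnp$ and exploiting harmonicity is what makes the shell theorem applicable, and this same dimensional shift is presumably what drives the $\star$-function argument to follow.
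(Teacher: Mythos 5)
Your proof is correct, but it takes a genuinely different route from the paper. The paper works outward: it shows the spherical mean of $u$, viewed as a function of the radius $r$, is a radial harmonic function of $r$ on the exterior of $K$ (hence of the form $c_1/r^{n-1}+c_0$, or $c_1\log(1/r)+c_0$ when $n=1$), and it pins down $c_1=1$, $c_0=0$ by matching the asymptotic expansion of the potential as $r\to\infty$. You instead work inward: you use Fubini to reduce the spherical mean to $\int_K F(y)\,d\mu(y)$ with $F(y)$ the spherical average of the kernel, then observe that $F$ is a radial harmonic function of $y$ on the open ball $\{|y|<r\}$ and therefore constant by boundedness at the origin --- this is precisely the Newton shell theorem, which you reprove rather than invoke. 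Both arguments hinge on the same structural fact, namely that radial harmonic functions in $\Rnp$ are spanned by $1$ and $|\xh|^{-(n-1)}$ (or $\log|\xh|$), and both exploit the dimensional shift $n-1=(n+1)-2$ that turns the $(n-1)$-Riesz kernel into the fundamental solution of the Laplacian on $\Rnp$; they simply apply that fact to different radial functions. Your version avoids any need to quantify the $O(1/r^n)$ error term in the expansion of $u$, which makes the leading-order bookkeeping cleaner, while the paper's version avoids the Fubini interchange entirely (indeed, the interchange is trivially justified since the kernel is bounded once $K$ sits inside the open ball, so your remark about the singularity having order $n-1<n$ is only relevant in the borderline case $|y|=r$, which can be sidestepped by shrinking $r$ slightly).
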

Here $d\xi$ is the surface area element on the sphere $\Sphn$. In what follows, we write $x \in \Rn$ and $\xh=(x,z)$ for a typical point in $\Rnp$, as depicted in \autoref{fig:K} earlier in the paper. 
\begin{proof}
Suppose $n \geq 2$ and $K \subset \Rnp$. The potential 
\begin{equation} \label{eq:pot1}
u(r\xi) = \int_K \frac{1}{|r\xi-\xh|^{n-1}} \, d\mu(\xh) 
\end{equation}
decays like $1/r^{n-1}+O(1/r^n)$ as $r \to \infty$. After integrating with respect to $d\xi/|\Sphn|$, the same decay rate holds for the spherical mean value of $u$. On the other hand, the spherical mean value of $u$ is a radial harmonic function of $r$, when $r$ is large enough that $\B^{n+1}(r) \supset K$, and so it must have the form $c_1/r^{n-1}+c_0$. Comparing with the known decay rate, we deduce $c_1=1$ and $c_0=0$. 

Now suppose $n=1$ and $K \subset \R^2$. The logarithmic potential 
\begin{equation} \label{eq:pot2}
u(r\xi) = \int_K \log \frac{1}{|r\xi-\xh|} \, d\mu(\xh)
\end{equation}
behaves like $\log (1/r)+o(1)$ as $r \to \infty$, and so after integrating with respect to $d\xi/|\Sph^1|$, the same behavior holds for the  mean value of $u$ over the circle of radius $r$. But when $r$ is large enough that $\B^2(r) \supset K$, the circular mean value is a radial harmonic function in $\R^2$ and so has the form $c_1 \log (1/r)+c_0$. The behavior at infinity implies that $c_1=1$ and $c_0=0$. 
\end{proof}

Define
\[
a_n = 
\begin{cases}
1/(n-1)|\Sphn| , & n \geq 2 , \\
1/2\pi , & n=1 .
\end{cases}
\]
\begin{lemma}[Difference of moments] \label{le:Phimoment}
Let $n \geq 1$. Assume $\Phi(r)$ is $C^3$-smooth for $r>0$ with $\Phi^{\prime\prime}(r)=O(1/r^{n+1-\delta})$ as $r\to 0$, for some $\delta>0$. Suppose $K$ and $B$ are compact sets in $\Rnp$ each having positive $(n-1)$-capacity. 

If $R>0$ is large enough that both sets lie in the ball $\B^{n+1}(R)$ then the difference of their $\Phi$-moments can be expressed as 
\begin{align}
\int_K \Phi(|\xh|) \, d\mu(\xh) - \int_B \Phi(|\xh|) \, d\nu(\xh) 
& = a_n \int_{\B^{n+1}(R)} \Psi(|\xh|) (v(\xh) - u(\xh)) \, d\xh \label{eq:momentvu} \\
& = a_n \bigg{(} \Psi(R)\int_{\B^{n+1}(R)} (v(\xh)-u(\xh))\, d\xh \label{eq:momentJ} \\
&  \hspace*{0.5cm} -\int_0^R \Psi^\prime (r) \int_{\B^{n+1}(r)}  (v(\xh)-u(\xh))\, d\xh \, dr \bigg{)} \notag
\end{align}
where $\mu$ and $\nu$ are the $(n-1)$-equilibrium measures for $K$ and $B$ in $\Rnp$, with corresponding equilibrium potentials $u$ and $v$, and $\Psi(r)=\Phi^{\prime\prime}(r)+(n/r)\Phi^\prime(r)$ is the Laplacian of $\Phi(r)$ in $\Rnp$. 
\end{lemma}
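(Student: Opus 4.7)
The plan is to recognize that $\Psi(r)$ is the full-space radial Laplacian: for $\hat x \in \Rnp$,
\[
\Delta \bigl[\Phi(|\hat x|)\bigr] = \Phi''(|\hat x|) + \frac{n}{|\hat x|}\Phi'(|\hat x|) = \Psi(|\hat x|).
\]
Simultaneously, the normalization built into $a_n$ is designed so that the equilibrium potentials satisfy the distributional Poisson equations
\[
-\Delta u = \frac{1}{a_n}\,\mu, \qquad -\Delta v = \frac{1}{a_n}\,\nu
\]
on $\Rnp$, because $a_n/|\hat x|^{n-1}$ (for $n\geq 2$) and $a_n\log 1/|\hat x|$ (for $n=1$) are the standard fundamental solutions of $-\Delta$ in $\Rnp$.

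With these two observations, formula \eqref{eq:momentvu} becomes a single application of Green's second identity on the ball $\B^{n+1}(R)$ with $\Phi(|\hat x|)$ and $u-v$:
\[
\int_{\B^{n+1}(R)}\!\bigl[(u-v)\,\Psi(|\hat x|) - \Phi(|\hat x|)\,\Delta(u-v)\bigr]\,d\hat x = \int_{\partial \B^{n+1}(R)}\!\bigl[(u-v)\partial_r\Phi - \Phi\,\partial_r(u-v)\bigr]\,dS.
\]
Plugging the distributional Laplacians into the $\Phi\Delta(u-v)$ term turns it into $-(1/a_n)\int \Phi\,d(\mu-\nu)$, since $\mu$ and $\nu$ are both supported in $\B^{n+1}(R)$. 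The crucial input for the boundary terms is \autoref{le:meanvalue}: it asserts that the spherical averages of $u$ and of $v$ over $\partial \B^{n+1}(r)$ agree with the same explicit formula for every $r\geq R$. Therefore $\int_{\partial \B^{n+1}(R)}(u-v)\,dS = 0$, and differentiating the identical mean-value expressions in $r$ shows $\int_{\partial \B^{n+1}(R)}\partial_r(u-v)\,dS = 0$ as well. Since $\Phi(R)$ and $\Phi'(R)$ are constants on the sphere, both boundary integrals vanish, and rearranging delivers \eqref{eq:momentvu}.

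For the second form \eqref{eq:momentJ}, the strategy is to reduce the volume integral in \eqref{eq:momentvu} to a one-dimensional integration by parts. Slicing in polar coordinates and setting
\[
f(r) = \int_{\B^{n+1}(r)} (v(\hat x)-u(\hat x))\,d\hat x,
\]
one has $f'(r) = \int_{\partial \B^{n+1}(r)}(v-u)\,dS$, so
\[
\int_{\B^{n+1}(R)}\Psi(|\hat x|)(v-u)\,d\hat x = \int_0^R \Psi(r)\,f'(r)\,dr = \Psi(R)\,f(R) - \int_0^R \Psi'(r)\,f(r)\,dr,
\]
provided the boundary term at $r=0$ vanishes. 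This is where the hypothesis $\Phi''(r)=O(r^{-(n+1-\delta)})$ enters: together with the local $L^1$ bound on $v-u$, it forces $|\Psi(r)f(r)|\to 0$ as $r\to 0$.

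The main obstacle is the rigorous justification of Green's identity in the presence of potential-theoretic singularities: $u$ and $v$ can equal $+\infty$ on sets of $(n-1)$-capacity zero inside $K$ and $B$, and $\Phi(|\hat x|)$ itself may blow up at the origin. The standard remedy is to mollify $\mu$ and $\nu$ into smooth densities $\mu_\epsilon,\nu_\epsilon$, apply classical Green to the resulting smooth potentials $u_\epsilon,v_\epsilon$, and pass to the limit $\epsilon\to 0$. The moment-side integrals converge because the growth bound on $\Phi''$ makes $\Phi(|\hat x|)$ $\mu$- and $\nu$-integrable, the volume integral converges by local integrability of $u,v$ and dominated convergence against $\Psi(|\hat x|)$, and the spherical-mean identity used to kill the boundary terms is stable under mollification when $R$ is strictly larger than the radii of $K$ and $B$.
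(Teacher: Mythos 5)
Your proposal is correct in outline and is essentially the same strategy as the paper's: express $\mu$ and $\nu$ via the distributional Laplacians $-a_n\Delta u=\mu$, $-a_n\Delta v=\nu$, apply Green's identity on $\B^{n+1}(R)$, kill the boundary terms using \autoref{le:meanvalue}, and obtain \eqref{eq:momentJ} by a one-dimensional integration by parts on $\Psi(r)\int_{\B^{n+1}(r)}(v-u)\,d\xh$ with the hypothesis on $\Phi''$ controlling the endpoint at $r=0$. Two execution details differ and are worth noting. First, you work with $u-v$ directly and need \emph{both} boundary integrals to vanish, the second one via $\frac{d}{dr}\int_{\Sphn}(u-v)(r\xi)\,d\xi=0$; the paper instead applies the identity to each potential separately using the test function $\Phi(R)-\Phi(|\xh|)$, which vanishes identically on $\partial\B^{n+1}(R)$ so that only one boundary term survives, and that term then cancels upon subtracting the $B$-version. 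Both routes are valid, and yours has the small aesthetic advantage that the cancellation is manifest from the start. Second, and more substantively: your regularization targets the wrong object. Mollifying $\mu,\nu$ into smooth densities addresses the (mild) singularities of $u,v$, but those are already handled by the distributional interpretation of Green's identity since $u,v$ are superharmonic and locally bounded. The real obstacle, which the paper devotes most of the proof to, is that $\Phi(|\xh|)$ need not be $C^2$ at the origin (e.g.\ $\Phi(r)=r^q$ with $0<q<2$). After mollifying $\mu,\nu$, you still cannot apply classical Green with a test function that fails to be $C^2$ at an interior point; you would have to excise a small ball $\B^{n+1}(\eta)$, estimate the inner boundary terms using $\Phi(\eta)=O(\eta^{-(n-1-\delta)})$, $\Phi'(\eta)=O(\eta^{-(n-\delta)})$ and send $\eta\to 0$, or else regularize $\Phi$ as the paper does via $\Phi_\e(r)=\Phi(\sqrt{\e^2+r^2})$ and pass to the limit $\e\to 0$ with explicit dominators. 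Your sketch acknowledges the blowup but does not carry out either of these steps, so as written the ``apply classical Green to the mollified potentials'' step is not yet justified. One further small point: near $r=0$ you invoke a ``local $L^1$ bound on $v-u$'' to control $\Psi(r)\!\int_{\B^{n+1}(r)}(v-u)$, but $L^1$-integrability only gives $o(1)$; you need the local boundedness of $v-u$ (which does hold, since the potentials are bounded above by the energy and bounded below locally) to conclude $\int_{\B^{n+1}(r)}(v-u)=O(r^{n+1})$ and hence $\Psi(r)\!\int_{\B^{n+1}(r)}(v-u)=O(r^\delta)\to 0$.
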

For the second moment ($q=2$), the lemma simplifies to say 
\begin{align*}
\int_K |\xh|^2 \, d\mu(\xh) - \int_B |\xh|^2 \, d\nu(\xh) 
& = 2(n+1)a_n \int_{\B^{n+1}(R)} (v(\xh) - u(\xh)) \, d\xh ,
\end{align*}
by taking $\Phi(r)=r^2$ in \eqref{eq:momentvu} and noting $\Psi(r) \equiv 2(n+1)$. 
\begin{proof}[Proof of formula \eqref{eq:momentvu} in the lemma] The Laplacian of the potential $u$ in \eqref{eq:pot1} and \eqref{eq:pot2} is proportional to the equilibrium measure, with 
\[
- a_n \Delta u = d\mu
\]
in the distributional sense since the fundamental solution for the Laplacian in $\Rnp$ is $a_n/|\xh|^{n-1}$ when $n \geq 2$ and $a_1 \log 1/|\xh|$ when $n=1$. 

Suppose initially that $\Phi(|\xh|)$ is $C^2$-smooth on $\Rnp$, including at the origin. By the distributional Laplacian formula, 
\begin{align}
& \frac{1}{a_n} \left( \int_K \Phi(|\xh|) \, d\mu(\xh) - \Phi(R) \right) \label{eq:distr} \\
& = \int_{\B^{n+1}(R)} \big( \Phi(R) - \Phi(|\xh|) \big) \, \frac{d\mu(\xh)}{-a_n} \notag \\
& = \int_{\B^{n+1}(R)} \Delta \big(\Phi(R)-\Phi(|\xh|) \big) u(\xh) \, d\xh - \int_{\Sphn(R)} \frac{\partial\ }{\partial \nu} \big( \Phi(R)-\Phi(|\xh|) \big) u(\xh) \, dS(\xh) \notag \\
& = - \int_{\B^{n+1}(R)} \Psi(|\xh|) u(\xh) \, d\xh + \Phi^\prime(R) \int_{\Sphn(R)} u \, dS \notag \\
& = - \int_{\B^{n+1}(R)} \Psi(|\xh|) u(\xh) \, d\xh + \Phi^\prime(R) |\Sphn(R)| \label{eq:repformula}
\begin{cases}
1/R^{n-1} , & n \geq 2 , \\
\log 1/R , & n=1 ,
\end{cases}
\end{align}
after using \autoref{le:meanvalue} in the final equality. 

We claim that formula \eqref{eq:repformula} continues to hold if the $C^2$-function $\Phi$ is not smooth at the origin and instead satisfies there the hypothesis in the lemma, namely that $\Phi^{\prime\prime}(r)=O(1/r^{n+1-\delta})$ as $r\to 0$. By choosing $\delta$ smaller if necessary, we may suppose $\delta<1$.

To begin with, integrating the hypothesis on $\Phi^{\prime\prime}$ implies that $\Phi^\prime(r)=O(1/r^{n-\delta})$. Integrating again shows $\Phi(r)=O(1/r^{n-1-\delta})$ when $n \geq 2$ and $\Phi(r)=O(1)$ when $n=1$. Thus when $n \geq 2$, the $\Phi$-moment in the lemma is well-defined, because near the origin $\Phi(|\xh|)$ grows slower than $1/|\xh|^{n-1}$, and the integral of $1/|\xh|^{n-1}$ with respect to $\mu$ is simply the Newtonian potential at the origin, which is finite. When $n=1$, $\Phi$ is bounded near the origin and so the $\Phi$-moment is obviously finite. Hence for each $n$, the integral in \eqref{eq:distr} is well defined.

The integral in \eqref{eq:repformula} is well defined too, under our hypothesis on $\Phi^{\prime\prime}$, since the potential $u$ is locally bounded while the estimates on $\Phi^{\prime\prime}$ and $\Phi^\prime$ near the origin ensure that $\Psi(r)=\Phi^{\prime\prime}(r)+n\Phi^\prime(r)/r=O(r^{-n-1+\delta})$ near $r=0$, so that $\Psi(|\xh|)$ is locally integrable on $\Rnp$.

To confirm that \eqref{eq:distr} and \eqref{eq:repformula} are still equal, under the hypothesis on $\Phi^{\prime\prime}$, we let $\e>0$ and consider the regularized function 
\[
\Phi_\e(r) = \Phi(\sqrt{\e^2+r^2}) ,
\]
noting that $\Phi_\e(|\xh|)$ is $C^2$-smooth on $\Rnp$. Applying \eqref{eq:distr} and \eqref{eq:repformula} to $\Phi_\e$ yields formulas we call \eqref{eq:distr}${}_\e$ and \eqref{eq:repformula}${}_\e$. By dominated convergence, \eqref{eq:distr}${}_\e$ approaches \eqref{eq:distr} as $\e \to 0$, with a dominator (which is needed when $n \geq 2$) being provided by our estimate on $\Phi$:
\[
|\Phi_\e(|\xh|)| = \Phi(\sqrt{\e^2+|\xh|^2}) \leq \frac{C}{(\e^2+|\xh|^2)^{(n-1-\delta)/2}} \leq \frac{C}{|\xh|^{n-1-\delta}} ,
\]
which is integrable with respect to $\mu$ by finiteness of the equilibrium potential at the origin. 

In \eqref{eq:repformula}${}_\e$, the factor $\Phi_\e^\prime(R)$ clearly converges to $\Phi^\prime(R)$. Thus the  task is to establish the following limit for the integral in \eqref{eq:repformula}${}_\e$: 
\[
\lim_{\e \to 0} \int_{\B^{n+1}(R)} \Psi_\e(|\xh|) u(\xh) \, d\xh = \int_{\B^{n+1}(R)} \Psi(|\xh|) u(\xh) \, d\xh .
\]
Since the potential $u$ is locally bounded, in order to invoke dominated convergence we need only show $\Psi_\e \to \Psi$ pointwise as $\e \to 0$ and find an appropriate dominator for $\Psi_\e$. By direct computation,  
\begin{align*}
\Psi_\e(r) 
& = \Phi_\e^{\prime\prime}(r)+\frac{n}{r} \Phi_\e^\prime(r) \\
& = \Phi^{\prime\prime}(\sqrt{\e^2+r^2}) \frac{r^2}{\e^2+r^2} + \frac{\Phi^\prime(\sqrt{\e^2+r^2})}{\sqrt{\e^2+r^2}} \left( \frac{\e^2}{\e^2+r^2} + n \right) \\
& \to \Phi^{\prime\prime}(r)+\frac{n}{r} \Phi^\prime(r) = \Psi(r)
\end{align*}
as $\e \to 0$, thus giving the desired pointwise limit. Observe  
\[|
\Psi_\e(r)| \leq |\Phi^{\prime\prime}(\sqrt{\e^2+r^2})| + \frac{|\Phi^\prime(\sqrt{\e^2+r^2})|}{\sqrt{\e^2+r^2}} (1+n) 
\]
and hence 
\[
|\Psi_\e(|\xh|)| \leq \frac{C}{(\e^2+|\xh|^2)^{(n+1-\delta)/2}} \leq \frac{C}{|\xh|^{n+1-\delta}}
\]
by our estimates on $\Phi^{\prime\prime}$ and $\Phi^\prime$. This last quantity $C/|\xh|^{n+1-\delta}$ is Lebesgue integrable on the ball $\B^{n+1}(R)$, as needed for a dominator. 

Now that \eqref{eq:distr} and \eqref{eq:repformula} are known to agree, we may subtract the analogous expressions for the set $B$ with measure $\nu$ and potential $v$, hence arriving at the formula \eqref{eq:momentvu} in the lemma. 
\end{proof}
\begin{proof}[Proof of formula \eqref{eq:momentJ}] For the second formula in the lemma, assume in addition that $\Phi$ is $C^3$-smooth, so that we may differentiate $\Psi$ in the following argument. Start by rewriting the right side of \eqref{eq:momentvu} using spherical coordinates as
\begin{align*}
& \int_{\B^{n+1}(R)} \Psi(|\xh|) (v(\xh) - u(\xh)) \, d\xh \\
& = \int_0^R \Psi(r) \left( \frac{d}{dr} \int_{\B^{n+1}(r)} (v(\xh) - u(\xh)) \, d\xh \right) \! dr. \\
\end{align*}
Integrating by parts yields formula \eqref{eq:momentJ}, since the boundary term at $r=0$ vanishes as follows:
\[
 \Psi(r)\int_{\B^{n+1}(r)}(v(\xh)-u(\xh))\,d\xh=O( r^{-n-1+\delta})\cdot O(r^{n+1}) \to 0
\]
as $r \to 0$, where we used that $v-u$ is bounded on $\B^{n+1}(R)$.
\end{proof}

\section{\bf Integral operator $J$}
\label{sec:J}

An important tool for the next few sections is the integral operator   
\[
(Ju)(r,z) = \int_{\B^n(r)} u(x,z) \, dx , \qquad r \geq 0, \quad z \in \R ,
\]
acting on a locally integrable function $u$ on $\Rnp$. Note that $Ju$ integrates over an $n$-dimensional ball centered in the slice at height $z$, as illustrated earlier in \autoref{fig:K}. 

Operators of this nature and more sophisticated variants in which one first replaces $u$ by its symmetric decreasing rearrangement on the slice have long been employed for elliptic and parabolic comparison theorems in a variety of coordinate systems, including rectangular, polar and spherical coordinates and the cylindrical coordinates needed in this paper. See for example the work of Alvino, Lions and Trombetti \cite[{\S}III]{ALT91}, the book by Baernstein \cite[Chapter 10]{B19}, and the survey by Talenti \cite[{\S\S}6--8]{T16}. Talenti's results in the 1970s and '80s were particularly influential. 

We borrow the $J$ notation from Baernstein, whose $\s$-function technique yields many sharp inequalities in complex analysis; see \cite[Chapter 11]{B19} and references therein. His approach emphasizes the role of commutation relations such as in the following lemma.   

Define a differential operator $\deltastar$  acting on functions $w(r,z)$ by 
\begin{align}
\deltastar w 
& = \partial_{rr} w - \frac{n-1}{r} \partial_r w + \partial_{zz} w \notag \\
& = r^{n-1} \partial_r \left( r^{1-n} \partial_r w \right) + \partial_{zz} w . \label{eq:Jmoment2}
\end{align}
\begin{lemma}[Commutation relation] \label{le:commute}
The $J$ operator ``commutes'' with the Laplacian $\Delta$ on $\Rnp$, in the sense that 
\[
\deltastar J = J \Delta .
\]
\end{lemma}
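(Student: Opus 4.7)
My plan is to compute $\deltastar J u$ directly by writing $Ju$ in polar coordinates on the slice at height $z$, and then recognize the right-hand side as $J\Delta u$ via the divergence theorem in the $x$ variables. Throughout, I would first assume $u \in C^2(\Rnp)$ so that differentiation under the integral sign and the divergence theorem apply classically, and at the end remark that the identity extends to less regular $u$ by mollification or by interpreting it distributionally (which is the form in which the lemma will be applied, since the equilibrium potentials $u,v$ in \autoref{le:Phimoment} are smooth off $K,B$ and distributionally nice elsewhere).

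The key computational steps are as follows. Using polar coordinates on $\B^n(r)$ and writing $A(s,z) = \int_{\Sph^{n-1}} u(s\xi,z)\,d\xi$, I would first observe
\[
Ju(r,z) = \int_0^r s^{n-1} A(s,z)\,ds,
\]
so that $\partial_r Ju = r^{n-1} A(r,z)$ and hence $r^{1-n}\partial_r Ju = A(r,z)$. Differentiating once more,
\[
r^{n-1}\partial_r\bigl(r^{1-n}\partial_r Ju\bigr) = r^{n-1}\partial_r A(r,z) = r^{n-1}\int_{\Sph^{n-1}} (\partial_s u)(r\xi,z)\,d\xi.
\]
Now the crux: I would apply the divergence theorem to the $x$-slice integral, using that $(\partial_s u)(r\xi,z) = \nabla_x u(r\xi,z)\cdot \xi$ is the outward normal derivative of $u(\cdot,z)$ on $\partial \B^n(r)$, and that the surface element is $r^{n-1}\,d\xi$. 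This yields
\[
r^{n-1}\int_{\Sph^{n-1}} (\partial_s u)(r\xi,z)\,d\xi = \int_{\partial\B^n(r)} \nabla_x u\cdot\nu\,dS = \int_{\B^n(r)} \Delta_x u(x,z)\,dx.
\]

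Finally, since differentiation in $z$ passes through the $x$-integral, $\partial_{zz} Ju = J(\partial_{zz} u)$. Adding this to the previous display and using the alternate form \eqref{eq:Jmoment2} of $\deltastar$ together with $\Delta = \Delta_x + \partial_{zz}$ on $\Rnp$ gives
\[
\deltastar Ju = r^{n-1}\partial_r\bigl(r^{1-n}\partial_r Ju\bigr) + \partial_{zz} Ju = \int_{\B^n(r)}\bigl(\Delta_x u + \partial_{zz} u\bigr)\,dx = J\Delta u,
\]
as desired.

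The main obstacle is not algebraic but rather one of regularity: the equilibrium potentials appearing later are only locally bounded and harmonic off $K$, so strictly speaking the identity is understood in a distributional/weak sense. I would handle this either by mollifying $u$ and passing to the limit, or by testing against a smooth compactly supported function and transferring derivatives onto the test function, noting that the computations above are formally dual to the same divergence-theorem manipulation applied to the test function. Everything else is a short exercise in polar coordinates.
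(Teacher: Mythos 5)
Your proof is correct and follows essentially the same route as the paper: rewrite $Ju$ in polar coordinates on the slice, apply the form \eqref{eq:Jmoment2} of $\deltastar$, convert the sphere integral to $\int_{\B^n(r)}\Delta_x u\,dx$ by the divergence theorem, and add $\partial_{zz}Ju = J\partial_{zz}u$. The only cosmetic difference is in the regularity discussion: the paper simply notes after the lemma that the argument is local in $z$ and so applies to functions that are $C^2$ away from $z=0$ (which is exactly the regularity the equilibrium potentials possess), whereas you gesture at mollification or a distributional reading; the paper's observation is the more economical one for its later use.
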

\begin{proof}
Suppose $u \in C^2(\Rnp)$. Rewriting $Ju$ using spherical coordinates as 
\[
Ju(r,z) = \int_0^r \int_{\Sph^{n-1}} u(s\zeta,z) \, d\zeta \, s^{n-1} \, ds ,
\]
we find that formula \eqref{eq:Jmoment2} for the $\deltastar$ operator implies  
\begin{align*}
\deltastar (Ju(r,z))
& = \int_{\Sph^{n-1}} \partial_r u(r\zeta,z) r^{n-1} \, d\zeta + \int_0^r \int_{\Sph^{n-1}} \partial_{zz} u(s\zeta,z)\, d\zeta \, s^{n-1} \, ds \\
& = \int_{\partial \B^n(r)} \frac{\partial u}{\partial \nu}(x,z) \, dS(x) + \int_{\B^n(r)} \partial_{zz} u(x,z) \, dx .
\end{align*}
Applying the $n$-dimensional divergence theorem to the first term yields that 
\begin{align*}
\deltastar (Ju(r,z))
& =  \int_{\B^n(r)} \Delta_x u(x,z) \, dx + \int_{\B^n(r)} \partial_{zz} u(x,z) \, dx \\
& = \int_{\B^n(r)} \Delta u(x,z) \, dx = J (\Delta u)(r,z) ,
\end{align*}
so that $\deltastar J = J \Delta$. 
\end{proof}
Later in the paper we will deal with functions $u$ that are $C^2$-smooth only when $z \neq 0$. \autoref{le:commute} and its proof are local in $z$ and so continue to hold in that case.

\section{\bf Nonnegativity of $Jv-Ju$}

The difference of moments for $K$ and $B$ is expressed by formula \eqref{eq:momentJ} in terms of the difference $v-u$ of potentials. To prove this difference of moments is nonnegative, for \autoref{th:momentnone}, we start by showing $Jv \geq Ju$.

Consider a compact set $K\subset \Rn$. When $K$ is regarded as lying in $\Rnp$, the $(n-1)$-equilibrium potential $u$ is harmonic on $\Rnp \setminus K$. In particular, $u(x,z)$ is harmonic on the upper halfspace, where $z>0$.
\begin{proposition} \label{pr:JvJuPos} Let $n \geq 1$. If a compact set $K \subset \Rn$ has the same $(n-1)$-capacity as a closed ball $B \subset \Rn$ centered at the origin, $\capnone(K)=\capnone(B)>0$, then
\[
 J(v-u)(r,z)\geq 0,
 \]
 for $r\geq 0, z\in \mathbb{R}$, where $u$ and $v$ are the equilibrium potentials of $K$ and $B$ in $\Rnp$.
\end{proposition}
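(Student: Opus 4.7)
Plan: The approach is a maximum-principle argument for the operator $\deltastar$ of \autoref{le:commute} applied in the half-plane $\Omega := \{(r,z) : r>0,\ z>0\}$. Define $w := J(v-u)$. Since $\mu$ and $\nu$ are supported in the slice $\{z=0\}\subset\Rnp$, the potentials $u$ and $v$ are harmonic off the slice, and the commutation relation $\deltastar J = J\Delta$ gives $\deltastar w = J(\Delta v - \Delta u) = 0$ throughout $\Omega$. Both $u$ and $v$ are even in $z$, so $w$ is too, and it suffices to show $w\geq 0$ on $\overline{\Omega}$. On the axis, $w(0,z) = 0$ trivially since $\B^n(0)$ is a singleton.

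At infinity, the equal-capacity hypothesis $\capnone(K)=\capnone(B)$ forces the leading asymptotics of $u$ and $v$ (the $1/|\xh|^{n-1}$ term for $n\geq 2$, or $\log (1/|\xh|)$ when $n=1$) to cancel. Expanding one order further and using that the centered ball $B$ has vanishing electrostatic centroid, one finds $v(\xh) - u(\xh) = O(|\xh|^{-n})$, with leading correction odd in the slice variable $x$. That odd part integrates to zero on each $\B^n(r)$ cross-section, so $w(r,z)$ stays bounded as $r\to\infty$ and tends to $0$ as $|z|\to\infty$, with the quantitative bookkeeping supplied by \autoref{le:meanvalue}.

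The heart of the argument is showing $w(r,0) \geq 0$ on the slice boundary. Set $V := V_{n-1}(K) = V_{n-1}(B)$. The equilibrium potential of $B$ attains the maximum value $V$ throughout $B$, while $u\leq V$ everywhere. For $r\leq R$ (the radius of $B$), $\B^n(r)\subset B$ sits where $v=V\geq u$ pointwise, giving $w(r,0)\geq 0$ immediately. The difficult regime is $r>R$: on $\B^n(r)\setminus B$ one has $v<V$, while $u$ may equal $V$ on regular points of $K$ lying outside $B$, so the pointwise inequality breaks and one must exploit the integral character of $Ju$ and $Jv$. This is the main obstacle. Adapting the Baernstein $\star$-function strategy, I would replace $u(\cdot,0)$ by its symmetric decreasing rearrangement $u^*(\cdot,0)$ on $\Rn$, use the Hardy--Littlewood inequality $Ju(r,0)\leq \int_{\B^n(r)} u^*(x,0)\,dx$, and then reduce to a one-dimensional comparison of the radial profiles $u^*(\cdot,0)$ and $v(\cdot,0)$ in which the shared-capacity constraint pins things down.

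With $w=0$ on the axis, $w\geq 0$ on the slice, and $w$ bounded at infinity (with decay as $|z|\to\infty$), the maximum principle for the uniformly elliptic operator $\deltastar$ on $\Omega$ yields $w\geq 0$ in $\Omega$, and hence on all of $\{r\geq 0,\ z\in\R\}$ by $z$-symmetry. A convenient reformulation uses $\tilde w := w/r^n$: the factorization $\deltastar(r^n\tilde w) = r^n\bigl(\partial_{rr}\tilde w + \tfrac{n+1}{r}\partial_r \tilde w + \partial_{zz}\tilde w\bigr)$ identifies $\tilde w$ with a bounded harmonic function on the upper half-space of $\R^{n+3}$ in cylindrical coordinates, so the Poisson-integral representation for bounded harmonic functions with nonnegative boundary data delivers $\tilde w\geq 0$, and hence $w\geq 0$.
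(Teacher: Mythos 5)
There is a genuine gap at exactly the step you flag as the heart of the argument: showing $w(r,0)\geq 0$ when $r$ exceeds the radius $\rho$ of $B$. Your sketch --- replace $u(\cdot,0)$ by its symmetric decreasing rearrangement $u^*(\cdot,0)$, apply Hardy--Littlewood, and ``reduce to a one-dimensional comparison'' --- does not actually close the argument. Hardy--Littlewood gives $Ju(r,0)\leq \int_{\B^n(r)}u^*\,dx$, so you would still need $\int_{\B^n(r)}v(x,0)\,dx\geq \int_{\B^n(r)}u^*(x,0)\,dx$ for all $r>\rho$. This is not at all evident from equal capacities alone: on $|x|>\rho$ one has $v(x,0)<V_{n-1}(B)$, while $u^*(x,0)$ can be close to $V_{n-1}(K)=V_{n-1}(B)$ if $K$ has charge far from the origin, so the pointwise comparison fails and the integral comparison requires a substantive argument that your proposal does not supply. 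Indeed, the paper explicitly remarks that it \emph{avoids} the rearrangement step: the potentials are harmonic (not merely subharmonic) off the slice $\{z=0\}$, which lets one apply $J$ directly rather than pass to $u^*$, and the whole design of the proof is to circumvent a direct bottom-boundary comparison for $r>\rho$.

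The paper instead establishes a strictly stronger fact at $r=\infty$ than your boundedness claim --- namely $w(\infty,z)=0$ exactly for every finite $z$ (proved by showing $\tfrac{d^2}{dz^2}w(\infty,z)=0$ and then using the decay as $z\to\infty$) --- and then rules out a negative bottom minimum for $r>\rho$ by contradiction: if $w(r,0)<0$ were the minimum, the strong minimum principle plus the Hopf boundary lemma would force $\liminf_{z\to 0+}(w(r,z)-w(r,0))/z>0$, yet the identity $w(\infty,z)=0$ lets one rewrite $\partial_z w(r,z)$ as an integral over the exterior $\Rn\setminus\B^n(r)$, and combined with $\partial_z u\leq 0$ and the explicit formula for $\partial_z v$ this gives $\limsup_{z\to 0+}\partial_z w(r,z)\leq 0$, a contradiction. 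Your $\tilde w = w/r^n$ reformulation and the Poisson-integral representation are a legitimate way to run the final maximum-principle step, but they presuppose nonnegativity of the bottom data, which is precisely what is at issue; and your asymptotic analysis at infinity would need to be sharpened from ``$w$ stays bounded'' to ``$w(\infty,z)=0$'' for the paper's Hopf-lemma argument (or any substitute for it) to go through.
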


We prove \autoref{pr:JvJuPos} in several steps.

\subsection*{Step 1: $Jv-Ju$ extends continuously to $[0,\infty] \times [0,\infty]$}
Let 
\[
w=Jv-Ju \qquad \text{for\ } (r,z) \in [0,\infty) \times [0,\infty) .
\]
Clearly $w$ is continuous when $z>0$, because $v(x,z)$ and $u(x,z)$ are smooth away from the sets $B$ and $K$, respectively, and those sets lie in the slice at height $z=0$. For continuity at $z=0$, note the potentials $u$ and $v$ are locally bounded and are continuous at $(x,0)$ for almost every $x \in \Rn$ by the potential theoretic properties in \autoref{sec:background} (take $m=n+1$ there), so that dominated convergence yields continuity of $Ju$ and $Jv$ at $(r,0)$ for every $r \geq 0$. Thus $w$ is continuous everywhere in the closed first quadrant of the $rz$-plane. We wish to extend it continuously to $r=\infty$ and $z=\infty$. 

\begin{lemma} \label{le:wcont1}
$w(r,z)$ extends continuously to $[0,\infty) \times [0,\infty]$, equalling $0$ when $z=\infty$.
\end{lemma}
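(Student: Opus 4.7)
The plan is to reduce continuity at the boundary points $(r_0,\infty)$ to showing that $w(r,z) \to 0$ uniformly on compact $r$-intervals $[0,R]$ as $z\to\infty$. Combined with the continuity of $w$ on $[0,\infty)\times[0,\infty)$ established just above, this uniform decay gives a continuous extension to $[0,\infty)\times[0,\infty]$ with value $0$ along the segment $z=\infty$.

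For $n\geq 2$ I would just bound each potential directly. Since $K$ and $B$ are compact subsets of the slice $z=0$, both lie in some ball $\B^{n+1}(R_0)$, and for $|\xh|\geq 2R_0$ one has $|\xh-\yh|\geq |\xh|/2$ for every $\yh$ in the supports of $\mu$ and $\nu$. Since $\mu$ and $\nu$ are probability measures, formula \eqref{eq:pot1} gives the uniform bound $u(\xh),\, v(\xh) \leq 2^{n-1}/|\xh|^{n-1}$. For $|x|\leq R$ and $z$ sufficiently large, $|(x,z)|\geq z$, so integrating over $\B^n(r)$ yields $|w(r,z)| \leq 2^n |\B^n(R)|/z^{n-1}$, which tends to $0$ uniformly in $r\in[0,R]$.

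The case $n=1$ is the main obstacle, since each logarithmic potential diverges to $-\infty$ at infinity and the decay of $v-u$ must come from cancellation. Because $\mu$ and $\nu$ are both probability measures on a common compact set, the signed measure $\mu-\nu$ has total mass zero; combining this with the uniform expansion $\log|\xh-\yh| = \log|\xh| + O(1/|\xh|)$ (valid for $\yh$ in a fixed bounded set and $|\xh|$ large), the constant term cancels and one obtains
\[
v(\xh) - u(\xh) = \int \bigl(\log|\xh-\yh| - \log|\xh|\bigr)\,(d\mu-d\nu)(\yh) = O(1/|\xh|).
\]
The same integration argument as before then shows $|w(r,z)| = O(R/z)$ uniformly in $r\in[0,R]$. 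With uniform decay in both cases, the continuous extension to $[0,\infty)\times[0,\infty]$ with $w(r,\infty)\equiv 0$ follows immediately.
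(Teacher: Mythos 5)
Your argument is correct as a proof of \autoref{le:wcont1}, and it is cleaner than the paper's in one respect: for $n\geq 2$ you bound $v$ and $u$ \emph{individually} by $O(1/|\xh|^{n-1})$ and integrate, needing no cancellation at all, whereas the paper carries out a binomial expansion and subtracts to obtain the sharper estimate $\int_{\Sph^{n-1}}(v-u)\,d\zeta = O\bigl((r^2+z^2)^{-(n+1)/2}\bigr)$. Your $n=1$ argument, by contrast, is essentially the paper's: you exploit that $\mu-\nu$ has mass zero to kill the $\log|\xh|$ term, which is the same cancellation the paper performs by subtracting \eqref{eq:n=1expansion} for $K$ and $B$. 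The trade-off is in the strength of the conclusion. Your bounds, $|w(r,z)|\lesssim |\B^n(R)|/z^{n-1}$ for $n\geq 2$ and $|w(r,z)|\lesssim R/z$ for $n=1$, blow up as $R\to\infty$, so they give only local uniformity in $r$ (on compact intervals), which is indeed all that continuity at points $(r_0,\infty)$ with $r_0<\infty$ requires. The paper instead establishes $w(r,z)=O(1/z)$ \emph{uniformly over all} $r\in[0,\infty)$ by integrating the decaying estimate over the whole range of $r$ rather than bounding by a supremum on $[0,R]$; that global uniformity is not needed for \autoref{le:wcont1}, but it is explicitly invoked in the proof of \autoref{le:wcont2} to obtain $w(\infty,z)=O(1/z)$ and thereby handle the corner $(\infty,\infty)$. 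So your proof is complete for the lemma as stated, but be aware that the paper's stronger estimate is doing extra work downstream, and if you carry your version forward you will need to upgrade it (or re-prove the uniform bound) before \autoref{le:wcont2}.
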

The decay of $v-u$ near infinity is not enough by itself to extend $Jv-Ju$ successfully. Cancellations due to the integration of $v$ and $u$ over the ball $\B^n(r)$ are needed too, as we shall see in the proof. 
\begin{proof}
Extend $w$ to the ``top'' side of its domain, where $z=\infty$, by defining $w(r,\infty)=0$ for $r \in [0,\infty)$. To prove that this extended $w$ is continuous, we show 
\begin{equation} \label{eq:Jv-Ju}
w(R,z) = Jv(R,z)-Ju(R,z)=O(1/z) \qquad \text{as $z \to \infty$,}
\end{equation}
uniformly with respect to $R \in [0,\infty)$. 

First we prove (\ref{eq:Jv-Ju}) when $n=1$, for $K \subset \R$. Its logarithmic equilibrium potential is 
\begin{align}
u(x,z) & = \int_K \log\frac{1}{|(x,z)-(y,0)|} \, d\mu(y)  \notag \\
& = \frac{1}{2} \int_K \log \frac{1}{x^2+z^2+y^2-2xy} \, d\mu(y)  \label{eq:n=1potential} \\
& = \frac{1}{2}\log\frac{1}{x^2+z^2}-\frac{1}{2}\int_K \log\left(1+\frac{y^2-2xy}{x^2+z^2} \right) d\mu(y) \notag \\
& = \frac{1}{2}\log\frac{1}{x^2+z^2}-\frac{1}{2}\int_K \frac{y^2-2xy}{x^2+z^2} \, d\mu(y) + O \left( \frac{1+|x|}{x^2+z^2} \right)^{\!\!2} \notag 
\end{align}
since $\log(1+t)=t+O(t^2)$ when $|t|$ is small; note in the remainder term that we used the boundedness of $|y|$ when $y \in K$. Hence 
\begin{equation} \label{eq:n=1expansion}
u(x,z) = \frac{1}{2}\log\frac{1}{x^2+z^2}+\frac{x}{x^2+z^2}\int_Ky\, d\mu(y)+O\left(\frac{1}{x^2+z^2}\right) 
\end{equation}
for all $x \in \R$, provided $z \geq 1$ is large enough that the remainder term $O(1/(x^2+z^2))$ is suitably small. The function $x \mapsto x/(x^2+z^2)$ is odd and so the second term in (\ref{eq:n=1expansion}) integrates to $0$ over $x \in (-R,R)$. Additionally, the first term in (\ref{eq:n=1expansion}) is the same for both $K$ and $B$, and so it cancels when we take the difference $v-u$. Hence 
\begin{align*}
|Jv(R,z)-Ju(R,z)|
& = \left| \int_{-R}^R (v(x,z)-u(x,z))\, dx \right| \\
& \leq O\left( \int_{-\infty}^{\infty} \frac{1}{x^2+z^2} \, dx \right)
 =O\!\left(\frac{1}{z}\right) ,
\end{align*}
as $z \to \infty$, by evaluating the integral. This completes the proof of \eqref{eq:Jv-Ju} for $n=1$.

Next we carry out the analogous computations for $n\geq 2$. The equilibrium potential of $K$ at the point $(r\zeta,z) \in \Rnp$ (where the spherical coordinates on $\Rn$ are $x=r\zeta, r \geq 0, \zeta \in \Sph^{n-1}$) is  
\begin{align}
& u(r\zeta,z) \notag \\
& = \int_K \frac{1}{|(r\zeta,z)-(y,0)|^{n-1}} \, d\mu(y) \notag \\
& = \int_K \frac{1}{(r^2+z^2+|y|^2-2r\zeta \cdot y )^{(n-1)/2}} \, d\mu(y) \label{eq:potential} \\ 
& = \frac{1}{(r^2+z^2)^{(n-1)/2}} \int_K \left( 1+\frac{|y|^2-2r\zeta \cdot y }{r^2+z^2} \right)^{\! \! -(n-1)/2} \! d\mu(y) \notag \\ 
& = \frac{1}{(r^2+z^2)^{(n-1)/2}} \left\{ \int_K \left( 1 - \frac{n-1}{2} \, \frac{|y|^2-2r\zeta \cdot y}{r^2+z^2} \right) d\mu(y) + O \! \left( \frac{1+r}{r^2+z^2}\right)^{\!\!2}  \right\} \notag
\end{align}
by the binomial approximation, again using that $|y|$ is bounded.  Integrating $\zeta \in \Sph^{n-1}$ with respect to the surface measure element $d\zeta$ on the sphere eliminates the term $\zeta \cdot y$ from the integrand, because it has integral zero. Thus 
\begin{equation} \label{eq:mean1}
\int_{\Sph^{n-1}} u(r\zeta,z) \, d\zeta = \frac{|\Sph^{n-1}|}{(r^2+z^2)^{(n-1)/2}} \left\{ 1 + O \! \left( \frac{1}{r^2+z^2} \right) \right\} 
\end{equation}
where we also simplified the remainder term by estimating $(1+r)^2$ with $O(r^2+z^2)$, assuming $z \geq 1$. The same estimate holds when $K$ is replaced by the set $B$, and so subtracting the two formulas gives that 
\begin{equation} \label{eq:mean2}
\int_{\Sph^{n-1}} \left( v(r\zeta,z) - u(r\zeta,z) \right) d\zeta = O \! \left( \frac{1}{(r^2+z^2)^{\! (n+1)/2}} \right) 
\end{equation}
for all $r \in [0,\infty)$, provided $z \geq 1$ is large enough that the remainder term on the right side of \eqref{eq:mean2} is small enough to justify the earlier application of the binomial approximation. Multiplying by $r^{n-1}$ and integrating from $0$ to $R$ yields that 
\[
\left| Jv(R,z) - Ju(R,z) \right| \leq O \left( \int_0^\infty \frac{r^{n-1}}{(r^2+z^2)^{\! (n+1)/2}} \, dr \right) = O \! \left( \frac{1}{z} \right)
\]
as $z \to \infty$,  by changing variable with $r \mapsto rz$ in the integral. This uniform estimate completes the proof of \eqref{eq:Jv-Ju} when $n \geq 2$, and hence finishes the lemma. 
\end{proof}

\begin{lemma} \label{le:wcont2}
$w(r,z)$ extends continuously to $[0,\infty]\times [0,\infty]$, that is, to $r=\infty$. 
\end{lemma}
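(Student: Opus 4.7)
The plan is to define $w(\infty,z)=\lim_{R\to\infty}w(R,z)$ and show that this limit exists uniformly in $z\in[0,\infty]$. Since each function $w(R,\cdot)$ is continuous on $[0,\infty]$ by \autoref{le:wcont1}, uniform convergence then forces $w(\infty,\cdot)$ to be continuous on $[0,\infty]$ and gives the desired continuous extension of $w$ to the closed rectangle $[0,\infty]\times[0,\infty]$, matching the already-known boundary value $w(R,\infty)=0$ at the corner $(\infty,\infty)$. The driving principle is cancellation: because $K$ and $B$ share the same $(n-1)$-capacity, the potentials $u$ and $v$ have the same leading-order behavior at infinity, so $v-u$ decays faster than either potential alone and its integral over the expanding slice balls $\mathbb{B}^n(r)$ converges.

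For $n\geq 2$, I will reuse the estimate \eqref{eq:mean2}, observing that its derivation from \eqref{eq:potential} remains valid uniformly in $z\in[0,\infty]$ once $r$ exceeds some $r_0$ depending only on $\mathrm{diam}(K)$ and $\mathrm{diam}(B)$: the binomial remainder $(1+r)^2/(r^2+z^2)^2$ is dominated by $4/(r^2+z^2)$ for $r\geq 1$, independently of $z$. Multiplying \eqref{eq:mean2} by $r^{n-1}$ and integrating from $R_1$ to $R_2$ with $r_0\leq R_1\leq R_2\leq\infty$, and using the pointwise bound $r^{n-1}/(r^2+z^2)^{(n+1)/2}\leq 1/r^2$ valid for every $z\geq 0$, I get
\[
|w(R_2,z)-w(R_1,z)|\leq C\int_{R_1}^{\infty}\frac{dr}{r^2}=\frac{C}{R_1},
\]
uniformly in $z\in[0,\infty]$. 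For $n=1$ the analogous argument uses \eqref{eq:n=1expansion}: the leading $\tfrac12\log(1/(x^2+z^2))$ terms of $u$ and $v$ coincide and cancel; the next-order term $\tfrac{x}{x^2+z^2}\int_K y\,d\mu(y)$ is odd in $x$ and integrates to zero over the symmetric set $R_1\leq|x|\leq R_2$, while its $B$-counterpart vanishes outright because $B$ is centered and $\int_B y\,d\nu=0$; the $O(1/(x^2+z^2))$ remainder contributes at most $2C/R_1$ since $\int_{|x|\geq R_1}dx/(x^2+z^2)\leq 2/R_1$ uniformly in $z$.

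The resulting uniform Cauchy estimate makes $w(R,\cdot)$ converge uniformly on $[0,\infty]$ to a continuous limit $w(\infty,\cdot)$, and since $w(R,\infty)=0$ for all $R$ by \autoref{le:wcont1} one obtains $w(\infty,\infty)=0$, matching the row $w(r,\infty)\equiv 0$. The main obstacle I anticipate is verifying the uniformity in $z$ of the asymptotic expansions all the way down to $z=0$, whereas in the proof of \autoref{le:wcont1} they were only invoked for $z\geq 1$; this is resolved by the elementary observation above that the binomial/Taylor remainders are controlled purely by the size of $r$ relative to $\mathrm{diam}(K)$ and $\mathrm{diam}(B)$ and not by the size of $z$.
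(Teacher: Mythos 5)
Your proposal is correct and follows essentially the same route as the paper's proof: both rely on the same asymptotic expansions \eqref{eq:n=1expansion} and \eqref{eq:mean2}, both observe that the expansions hold uniformly for all $z\geq 0$ once $r$ is large (since $(1+r)^2/(r^2+z^2)\leq 4$ for $r\geq 1$), and both derive the decisive bound $O(1/r^2)$ on the angular average of $v-u$ times $r^{n-1}$. The only difference is packaging: the paper writes $w(r,z)=Jv(R,z)-Ju(R,z)+\int_R^r Q(s,z)\,ds$ with $|Q(s,z)|\leq C/s^2$ and invokes dominated convergence, whereas you phrase the same bound as a uniform Cauchy estimate and appeal to the uniform limit theorem; these are interchangeable and equally rigorous.
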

\begin{proof} We will prove below that positive constants $C,R$ exist such that   
\begin{equation} \label{eq:wcont}
w(r,z)  = Jv(R,z) - Ju(R,z) + \int_R^r Q(s,z) \, ds 
\end{equation}
for $r>R$ and $z\geq 0$, where the quantity $Q(s,z)$ is continuous and satisfies $|Q(s,z)| \leq C/s^2$. Dominated convergence then confirms that $w(r,z_*)$ possesses a limiting value as $r \to \infty$ and $z_* \to z \in [0,\infty)$, and that this extension of $w$ to $r=\infty$ is continuous at each point $(\infty,z)$ with $z \in [0,\infty)$. The value of $w$ at that point can be found by taking the limit as $r \to \infty$ with $z$ fixed, so that 
\[
w(\infty,z) = \lim_{r \to \infty} w(r,z) = \int_0^\infty \int_{\Sph^{n-1}} \left( v(s\zeta,z) - u(s\zeta,z) \right) d\zeta \, s^{n-1} \, ds . 
\]
When $n=1$ in this formula, we mean $\zeta \in \Sph^0= \{-1,+1\}$ and the $\zeta$-integral sums over these two values. 

We must still extend $w$ to the ``top right corner'', where $r=\infty$ and $z=\infty$. We showed in \eqref{eq:Jv-Ju} that $w(r,z)=O(1/z)$ as $z \to \infty$, with the $O(1/z)$ term being bounded independently of $r \in [0,\infty)$. By letting $r \to \infty$ we get also that $w(\infty,z)=O(1/z)$. Thus defining $w(\infty,\infty)=0$  completes the continuous extension of $w$ to $[0,\infty] \times [0,\infty]$. 

Let us turn now to proving \eqref{eq:wcont}. First consider the case $n=1$. If $|x| \geq 1$ then $(1+|x|)/(x^2+z^2) \leq 2/|x|$, and so by repeating the derivation of estimate \eqref{eq:n=1expansion} we find for $|x|$ sufficiently large that  
\begin{equation} \label{eq:dagger1}
u(x,z)=\frac{1}{2}\log\frac{1}{x^2+z^2}+\frac{x}{x^2+z^2}\int_Ky\, d\mu(y)+O\left(\frac{1}{x^2}\right)
\end{equation}
where the remainder bound is uniform in $z \geq 0$. That is, positive constants $C$ and $R$ exist such that \eqref{eq:dagger1} holds with $|O(1/x^2)| \leq C/x^2$ whenever $|x|>R, z \geq 0$. Hence
\begin{align*}
w(r,z) & = Jv(r,z)-Ju(r,z) \\
& = \int_{-R}^{R} (v(x,z)-u(x,z)) \, dx + \int_{R<|x|<r} (v(x,z)-u(x,z)) \, dx \\
& = Jv(R,z)-Ju(R,z) + \int_{R<|x|<r} O(1/x^2) \, dx,
\end{align*}
which proves \eqref{eq:wcont}.

Next we handle the case $n\geq 2$. If $r \geq 1$ then $(1+r)/(r^2+z^2) \leq 2/r$ and so repeating the argument for \eqref{eq:mean1} shows that for $r$ sufficiently large, 
\begin{equation} \label{eq:dagger2}
\int_{\Sph^{n-1}} u(r\zeta,z) \, d\zeta = \frac{|\Sph^{n-1}|}{(r^2+z^2)^{(n-1)/2}} \left\{ 1 + O \! \left( \frac{1}{r^2} \right) \right\} 
\end{equation}
where the remainder term is uniform in $z \geq 0$. That is, constants $C,R>0$ exist such that \eqref{eq:dagger2} holds with $|O(1/r^2)| \leq C/r^2$ whenever $r>R, z \geq 0$. By subtracting the analogous expression for the set $B$, we find 
\[
\int_{\Sph^{n-1}} \left( v(r\zeta,z) - u(r\zeta,z) \right) \, d\zeta =  \frac{1}{(r^2+z^2)^{(n-1)/2}} \, O \! \left( \frac{1}{r^2} \right) = O \! \left( \frac{1}{r^{n+1}} \right) 
\]
for $r>R$ and $z \geq 0$. The integral for $w$ can therefore be written in spherical coordinates as 
\begin{align*}
w(r,z) & = Jv(r,z)-Ju(r,z) \\
& = \int_{\B^n(R)} (v(x,z)-u(x,z)) \, dx + \int_R^r \int_{\Sph^{n-1}} \left( v(s\zeta,z) - u(s\zeta,z) \right) \, d\zeta \, s^{n-1} \, ds \\
& = Jv(R,z) - Ju(R,z) + \int_R^r O(1/s^2) \, ds 
\end{align*}
for $r>R$ and $z\geq 0$, which proves \eqref{eq:wcont}.
 \end{proof}

\subsection*{Step 2: the extended function $w=Jv-Ju$ equals $0$ on the left and right sides ($r=0,\infty$) and at the top ($z=\infty$)}
By definition, $Jv$ and $Ju$ vanish on the left side of their domain, where $r=0$ and $0 \leq z < \infty$, and in Step 1 we defined $w=0$ on the top side, where $r \in [0,\infty]$ and $z=\infty$. The next lemma shows $w=0$ on the right side, where $r=\infty$. 
\begin{lemma} \label{le:wlemma}
$w(\infty,z)=0$ when $0 \leq z < \infty$.
\end{lemma}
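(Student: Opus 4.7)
The plan is to show $z \mapsto w(\infty, z)$ is constant on $(0, \infty)$, identify this constant as $0$, and then invoke continuity at $z = 0$.

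Since $K, B \subset \Rn = \{z = 0\}$, the difference $v - u$ is harmonic on the upper half-space $\{z > 0\} \subset \Rnp$. Fix $z_0 > 0$ and apply the divergence theorem to the half-ball $D_\rho = \B^{n+1}(\rho) \cap \{z > z_0\}$ for $\rho$ large enough that $K \cup B \subset \B^{n+1}(\rho)$. The boundary of $D_\rho$ is the upper spherical cap $C_\rho = \Sphn(\rho) \cap \{z > z_0\}$ together with the flat disk $\B^n(\sqrt{\rho^2 - z_0^2}) \times \{z_0\}$, whose outward normal is $-\hat z$; hence
\[
0 = \int_{D_\rho} \Delta(v - u)\, d\xh = \int_{C_\rho} \partial_r(v - u)\, dS \;-\; \partial_z w\bigl(\sqrt{\rho^2 - z_0^2},\, z_0\bigr).
\]

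The cap integral is estimated by multipole expansion. Because $\mu$ and $\nu$ are probability measures, \autoref{le:meanvalue} shows that the monopole terms of $u$ and $v$ at infinity match ($1/|\xh|^{n-1}$ when $n \geq 2$, or $\log(1/|\xh|)$ when $n = 1$), so these leading terms cancel in the difference, leaving $v - u = O(|\xh|^{-n})$ and $\partial_r(v - u) = O(|\xh|^{-n-1})$ for $n \geq 2$, and respectively $O(|\xh|^{-1})$ and $O(|\xh|^{-2})$ for $n = 1$. Combined with the cap area bound $|C_\rho| \leq |\Sphn| \rho^n$ (respectively $\leq \pi\rho$ for $n=1$), this yields
\[
\Bigl|\int_{C_\rho} \partial_r(v - u)\, dS\Bigr| = O(1/\rho) \to 0 \quad \text{as } \rho \to \infty,
\]
uniformly in $z_0$ on any compact subset of $(0, \infty)$. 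Therefore $\partial_z w(r, z_0) \to 0$ as $r \to \infty$, uniformly on compact subsets of $z_0 \in (0, \infty)$.

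Integrating the identity $w(r, z_2) - w(r, z_1) = \int_{z_1}^{z_2} \partial_z w(r, z_0)\, dz_0$ and sending $r \to \infty$ (uniform convergence justifies the limit) gives $w(\infty, z_1) = w(\infty, z_2)$ for all $0 < z_1 < z_2$, so $w(\infty, \cdot)$ is constant on $(0, \infty)$. The uniform decay \eqref{eq:Jv-Ju} forces $w(\infty, z) \to 0$ as $z \to \infty$, identifying the constant as $0$; continuity of $w$ at $(\infty, 0)$ from \autoref{le:wcont2} then extends $w(\infty, z) = 0$ to $z = 0$. The main obstacle is verifying the pointwise decay $\partial_r(v - u) = O(|\xh|^{-n-1})$ on the spherical cap, which rests on the cancellation of the matching monopole moments delivered by the equal-capacity hypothesis through \autoref{le:meanvalue}.
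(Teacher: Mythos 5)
Your proof is correct but follows a genuinely different route from the paper's. The paper proves $(d^2/dz^2)w(\infty,z)=0$ rather than $(d/dz)w(\infty,z)=0$: it differentiates twice through the iterated integral defining $w(\infty,z)$ (justified by the bounds \eqref{eq:zderiv}, \eqref{eq:Czr} on $\partial_z u$ and the analogous bound on $\partial_{zz}u$), uses harmonicity of $v-u$ to replace $\partial_{zz}$ by $-\Delta_x$, and then applies the $n$-dimensional divergence theorem on each horizontal slice. The resulting boundary integral is $O(R^{n-1})\cdot O(R^{-n})=O(1/R)$, so the decay of each potential \emph{separately} suffices and no cancellation between $u$ and $v$ is needed. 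Knowing $w(\infty,\cdot)$ is linear and vanishes at $z=\infty$ then forces it to vanish identically. Your version applies the $(n+1)$-dimensional divergence theorem to a half-ball. This is more direct (you kill the first derivative rather than the second), but the spherical cap has $n$-dimensional area $\sim\rho^n$, so the decay $\partial_r u\sim\rho^{-n}$ of the individual potentials only yields a cap integral of size $O(1)$; you genuinely need the monopole cancellation to obtain $\partial_r(v-u)=O(\rho^{-n-1})$ and the $O(1/\rho)$ decay. Two small points deserve to be spelled out: (i) the monopole matching is really because $\mu$ and $\nu$ are both probability measures, not a consequence of \autoref{le:meanvalue} as such (that lemma gives only the spherical average, not the pointwise asymptotics you are using); and (ii) the passage from $v-u=O(|\xh|^{-n})$ to $\partial_r(v-u)=O(|\xh|^{-n-1})$ requires either a direct expansion of $\partial_r u$ from \eqref{eq:potential} and \eqref{eq:n=1potential}, or an appeal to the interior gradient estimate for harmonic functions on $\{|\xh|>R\}$. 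With those gaps filled, your argument is complete and arguably cleaner than the paper's, since it avoids the double differentiation under the integral sign and the harmonicity trick $\partial_{zz}\mapsto-\Delta_x$, at the cost of having to track the multipole asymptotics of the difference $v-u$.
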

\subsubsection*{Remark}
The difference of potentials $v-u$ is not integrable on $\Rnp$: the individual potentials decay slowly at infinity ($\sim 1/|\xh|^{n-1}$), and one can calculate that while taking the difference improves the decay rate, it does not improve enough to gain integrability. Thus the integral $\int_\Rnp (v-u) \, d\xh$ does not exist, and so although \autoref{le:wlemma} can be integrated to conclude that the iterated integral $\int_{-\infty}^\infty \lim_{r \to \infty} \int_{\B^n(r)} (v(x,z)-u(x,z)) \, dxdz$ equals $0$, we cannot apply Fubini's theorem to say the same about $\int_\Rnp (v-u) \, d\xh$. 
\begin{proof}[Proof of \autoref{le:wlemma}]
We will prove $(d^2/dz^2) w(\infty,z) = 0$ when $0 < z < \infty$, so that $w(\infty,z)$ is linear. Since it tends to zero as $z \to \infty$, by Step 1, the linear function $w(\infty,z)$ must equal $0$ for all $z$, as desired.

To develop a formula for the $z$-derivative of $w$, first differentiate the potential $u$ given in \eqref{eq:potential} for $n \geq 2$ and in \eqref{eq:n=1potential} for $n=1$, finding for $x \in \Rn$ and $z>0$ that 
\begin{equation} \label{eq:zderiv}
\partial_z u(x,z) = -b_n z \int_K \frac{1}{|(x,z)-(y,0)|^{n+1}} \, d\mu(y) < 0
\end{equation}
where
\[
b_n = 
\begin{cases}
n-1, & n \geq 2 , \\
1  , & n=1 .
\end{cases}
\]
Hence constants $C$ and $R$ exist such that 
\begin{equation} \label{eq:Czr}
|\partial_z u(x,z)| \leq \frac{Cz}{|x|^{n+1}} , \qquad |x|>R, \quad z>0 . 
\end{equation}
Similarly, the second derivative satisfies 
\[
|\partial_{zz} u(x,z)| \leq \frac{C(|x|^2+z^2)}{|x|^{n+3}} , \qquad |x|>R, \quad z>0 . 
\]
These estimates hold also for the potential $v$, providing integrable dominators (with respect to $x \in \Rn$) for the first and second $z$-derivatives of $u$ and $v$. Hence we may differentiate successively through the iterated integral that defines $w(\infty,z)$ to obtain that 
\begin{align*}
\frac{d^2\ }{dz^2} w(\infty,z) 
& = \frac{d^2\ }{dz^2} \int_0^\infty \int_{\Sph^{n-1}} \left( v(r\zeta,z) - u(r\zeta,z) \right) d\zeta \, r^{n-1} \, dr \\
& = \int_0^\infty \frac{\partial^2 \ }{\partial z^2} \int_{\Sph^{n-1}} \left( v(r\zeta,z) - u(r\zeta,z) \right) d\zeta \, r^{n-1} \, dr \\
& = \int_0^\infty \int_{\Sph^{n-1}} \left( - \Delta_x v(x,z) + \Delta_x u(x,z) \right) d\zeta \, r^{n-1} \, dr 
\end{align*}
since $\Delta_x u + \partial_{zz} u=0$ and $\Delta_x v + \partial_{zz} v=0$ by harmonicity of the potentials in $\Rnp \setminus K$. Applying the divergence theorem yields that  
\begin{align*}
\frac{d^2\ }{dz^2} \, w(\infty,z) 
& = \lim_{R \to \infty}\int_{\B^n(R)} ( - \Delta_x v + \Delta_x u ) \, dx \\
& = \lim_{R \to \infty} \int_{\Sph^{n-1}} \left( - \frac{\partial v}{\partial r}(R\zeta,z) + \frac{\partial u}{\partial r}(R\zeta,z) \right) \! R^{n-1} \, d\zeta = 0 ,
\end{align*}
since expressions \eqref{eq:n=1potential} and \eqref{eq:potential} for the potential imply that $\partial u/\partial r$ and $\partial v/\partial r$ decay like $1/r^n$ as $r \to \infty$ with $z$ fixed. 
\end{proof}

\subsection*{Step 3: $w=Jv-Ju \geq 0$ in $[0,\infty] \times [0,\infty]$ } 
Since $w$ is continuous on the compact set $[0,\infty] \times [0,\infty]$ by Step 1, one of three cases must occur:
\begin{itemize}
\item[(a)] $w$ attains its minimum on the left or right sides ($r=0,\infty$ and $0 \leq z < \infty$) or the top ($0 \leq r \leq \infty$ and $z=\infty$), 
\item[(b)] $w$ attains its minimum at an interior point ($0<r<\infty$ and $0<z<\infty$),  
\item[(c)] $w$ attains its minimum on the bottom ($0 < r < \infty$ and $z = 0$).
\end{itemize}

In case (a), the minimum value is $0$ by Step 2, so that $Jv-Ju \geq 0$ as desired. 

In case (b), the strong minimum principle for the elliptic operator $\deltastar$ requires $w$ to be constant, because $w$ attains an interior minimum and 
\[
\deltastar w = \deltastar J(v-u) = J \Delta (v-u) = 0
\]
by \autoref{le:commute} and harmonicity of the potentials on $\{z > 0\}$. Since $w=0$ on the sides, the constant value of $w$ must be zero and so $w \equiv 0$. That is, $Jv-Ju \equiv 0$. 

From now on we consider case (c). Write $(r,0)$ for a point at which the minimum of $w$ is attained. We need to show $w(r,0) \geq 0$. 

Suppose $0 < r \leq \rho$, where $\rho$ is the radius of $B$ in $\Rn$. When $n \geq 2$, the equilibrium potential for the ball satisfies $v(x,0) = V_{n-1}(B)$ for $|x| \leq \rho$ and the potential for $K$ has $u(x,0) \leq V_{n-1}(K)$ for all $x$ by \autoref{upperu} (taking $m=n+1$ there). When $n=1$, the corresponding results are that $v(x,0) = V_{log}(B)$ for $|x| \leq \rho$ and $u(x,0) \leq V_{log}(K)$ for all $x$. Since the energies of $B$ and $K$ are assumed in \autoref{pr:JvJuPos} to be equal, we have for all $n$ that $v(x,0) \geq u(x,0)$ whenever $|x| \leq \rho$. Hence the minimum value of $w$ is 
\[
w(r,0)=\int_{\B^n(r)} (v(x,0)-u(x,0)) \, dx \geq 0 ,
\] 
as needed. 

Lastly, suppose $\rho<r<\infty$. If the minimum value $w(r,0)$ equals $0$ then there is nothing to prove. Suppose instead that $w(r,0)<0$. Since $w$ is not identically constant (remember $w(0,z)=0$), the strong minimum principle implies that $w>w(r,0)$ on the first quadrant (where $\deltastar w=0$). The Hopf lemma says that the normal derivative of $w$ at the boundary minimum point $(r,0)$ is positive; more precisely, since we have not shown $w$ is differentiable at the boundary, we apply a version of Hopf's lemma \cite[inequality (3.11)]{GT01} that gives the weaker conclusion  
\[
\liminf_{z \to 0+} \frac{w(r,z)-w(r,0)}{z} > 0 .
\]
We will prove this strict inequality to be impossible by showing 
\begin{equation} \label{eq:deriv0}
\limsup_{z \to 0+} \partial_z w(r,z) \leq 0 .
\end{equation}
Hence the situation $w(r,0)<0$ cannot occur.

Recall from Step 2 that $w(\infty,z)=0$ for all $z$ and that differentiation through the integral gives
\begin{align}
0 
& = \frac{d\ }{dz} w(\infty,z) \notag \\
& = \int_0^\infty \int_{\Sph^{n-1}} \left( \partial_z v(s\zeta,z) - \partial_z u(s\zeta,z) \right) d\zeta \, s^{n-1} \, ds \notag \\
& = \int_\Rn \partial_z v(x,z) \, dx -  \int_\Rn \partial_z u(x,z) \, dx , \label{eq:vminusu}
\end{align}
where the two integrands are integrable for large $|x|$ by the estimate \eqref{eq:Czr}. Hence for the value $r$, when $z>0$ we find 
\begin{align*}
& \partial_z w(r,z) \\
& = \int_{\B^n(r)} \partial_z v(x,z) \, dx - \int_{\B^n(r)} \partial_z u(x,z) \, dx \\
& = -\int_{\Rn \setminus \B^n(r)} \partial_z v(x,z) \, dx + \int_{\Rn \setminus \B^n(r)} \partial_z u(x,z) \, dx && \text{by \eqref{eq:vminusu}} \\
& \leq -\int_{\Rn \setminus \B^n(r)} \partial_z v(x,z) \, dx && \text{since $\partial_z u \leq 0$ by \eqref{eq:zderiv}} \\
& = b_n z \int_{\Rn \setminus \B^n(r)} \int_B \frac{1}{|(x,z)-(y,0)|^{n+1}} \, d\nu(y) dx && \text{by \eqref{eq:zderiv} applied to $B$} \\
& \leq b_n z \int_{\Rn \setminus \B^n(r)} \frac{1}{(|x|-\rho)^{n+1}} \, dx && \text{because $|y| \leq \rho < r \leq |x|$} \\
& \to 0
\end{align*}
as $z \to 0+$. This completes the proof of \eqref{eq:deriv0} and hence of Step 3. 

\subsection*{Step 4: $w=Jv-Ju \geq 0$  when $z \leq 0$} 

Step 3 shows $Jv-Ju \geq 0$ when $z \geq 0$. The same inequality holds when $z \leq 0$, because $B$ and $K$ lie in $\Rn$ and so $v$ and $u$ are even with respect to $z$, which means $Jv$ and $Ju$ are also even. Hence $Jv-Ju \geq 0$ for all $r \geq 0, z \in \R$, proving \autoref{pr:JvJuPos}.

\subsubsection*{Remark} Readers familiar with Baernstein's $\s$-function technique in complex analysis \cite[Chapter 11]{B19} will recognize that Step 3 above adapts his framework. The advantage here is that the potentials are harmonic in the upper halfspace $\{ z>0 \}$ and not merely subharmonic. That stronger information allows us to apply $J$ directly to $u$ rather than to its symmetric decreasing rearrangement on each slice, which would be the analogue of the standard $\s$-function approach.

\section{\bf Proof of \autoref{th:momentnone}: the moment inequalities}
\label{sec:momentnoneproof}

\subsection*{Proof of moment comparison} Let $\Phi(r)=r^q$ with $0<q \leq 2$, so that 
\[
\Psi(r) = \Phi^{\prime\prime}(r) + \frac{n}{r} \Phi^\prime(r) = q(q+n-1) r^{q-2} .
\]
Then $\Phi$ satisfies the hypotheses of \autoref{le:Phimoment} with the choice $\delta=q+n-1>0$. Additionally, $\Psi(R) > 0$ since $q>0$ and $\Psi^\prime(r) \leq 0$ since $q \leq 2$. By integrating over horizontal slices for $-r<z<r$, we find 
\begin{equation} \label{Jstuff}
\int_{\B^{n+1}(r)}(v(\xh)-u(\xh))\,d\xh=\int_{-r}^r J(v-u)(\sqrt{r^2-z^2},z) \,dz,
\end{equation}
and so formula \eqref{eq:momentJ} of the lemma and the inequality $Jv-Ju\geq0$ from \autoref{pr:JvJuPos} imply that 
\[
\int_K \Phi(|x|) \, d\mu \geq \int_B \Phi(|x|) \, d\nu .
\]  
Also, when $n \geq 2$ the function $\Phi(r)=\log r$ satisfies the hypotheses of \autoref{le:Phimoment} with $\delta=n-1>0$, and $\Psi(r) =(n-1) r^{-2}$ once more satisfies $\Psi(R) > 0$ and $\Psi^\prime(r) \leq 0$. Hence formula \eqref{eq:momentJ} of the lemma implies $\int_K \log |x| \, d\mu \geq \int_B \log |x| \, d\nu$. 

This logarithmic moment inequality remains true for $n=1$: it says simply that the logarithmic potential of $K$ at the origin is less than or equal to the potential of $B$ at the origin, which holds by the logarithmic case of \autoref{upperu} since $K$ and $B$ have the same logarithmic capacity; that is, 
\[
\int_K \log 1/|x| \, d\mu \leq V_{log}(K) = V_{log}(B) = \int_B \log 1/|x| \, d\nu .
\]
By definition, equality holds if and only if the origin is a regular point of $K$. 

\subsection*{Proof of other equality cases} $\Longleftarrow$ direction: suppose $K = B \cup Z$ for some set $Z$ of inner capacity zero. Since $K$ contains $B$ and the two sets have the same capacity, the equilibrium measures of $K$ and $B$ must be the same, $\mu=\nu$. Hence the moments on $K$ and $B$ are equal. 

 $\Longrightarrow$ direction: suppose the $\Phi$-moments of $K$ and $B$ agree, for some $\Phi$ that satisfies the hypotheses of \autoref{le:Phimoment} with $\Psi(R) > 0$ and $\Psi^\prime(r) \leq 0$. Expression \eqref{eq:momentJ} for the difference of moments equals zero, and so in particular the first term in \eqref{eq:momentJ} must equal zero. From \eqref{Jstuff} with $r=R$ we deduce $J(v-u) = 0$ on the semicircle of radius $R$ in the right half of the $rz$-plane. Applying the strong minimum principle for $\deltastar$ in the first quadrant (where $z>0$) implies that $J(v-u) \equiv 0$ in that quadrant, and similarly in the fourth quadrant (where $z<0$) and hence also by continuity on the positive $r$-axis (where $z=0$). 
 
At the radius $\rho$ of $B$, we get $Jv(\rho,0) = Ju(\rho,0)$. But on $B$ the inequality
\[
v=V_{n-1}(B) = V_{n-1}(K) \geq u
\]
holds (or instead with logarithmic energies when $n=1$), and since the integrals of $v$ and $u$ over $B$ agree, necessarily $v=u$ at almost every point of $B$ with respect to $n$-dimensional Lebesgue measure. Hence at each such point we know $u=V_{n-1}(K)$, which implies by \autoref{upperu} (with $m=n+1$) that the point lies in $K$, using here that the complement $\Rnp \setminus K$ is connected and unbounded because $K \subset \Rn$ (and so $\partial K = K$). Thus $K$ contains almost every point of $B$, and because $K$ is closed, it must contain all of $B$. Then since $K$ and $B$ have the same energy, the equilibrium measures of $K$ and $B$ must be the same, $\mu=\nu$, so that $u \equiv v$. Let $Z=K \setminus B$, so that $K=B \cup Z$. On $Z$, we have 
\[
u = v < V_{n-1}(B) = V_{n-1}(K) ,
\]
and so $Z$ has inner capacity zero by the equality statement in \autoref{upperu}. 

\subsection*{Alternative deduction of the logarithmic moment inequality}
Rather than using $\Phi(r)=\log r$ in the proof above, one may obtain the logarithmic moment inequality as a limiting case of $r^q$ as $q \to 0$, as follows. By rescaling the two sets, we may suppose $|x| \leq 1$ for all $x \in K \cup B$. We proved above that 
\[
\int_K |x|^q \, d\mu \geq \int_B |x|^q \, d\nu , \qquad q \in (0,2] . 
\]
Thus 
\[
\int_K \frac{1 - |x|^q}{q} \, d\mu \leq \int_B \frac{1 -  |x|^q}{q} \, d\nu .
\]
The integrand is nonnegative since $q>0$ and $|x| \leq 1$, and as $q \searrow 0$ it increases to $-\log |x|$, by convexity of $q \mapsto e^{q \log |x|}$. Hence  monotone convergence yields that 
\[
- \int_K \log |x| \, d\mu \leq - \int_B \log |x| \, d\nu ,
\]
which gives the desired inequality for the logarithmic moments.

\section*{Acknowledgments}
Richard Laugesen's research was supported by grants from the Simons Foundation (\#964018) and the National Science Foundation ({\#}2246537).

\appendix

\section{\bf Potential theoretic background}
\label{sec:background}

\subsection*{Physical interpretation} The Newtonian energy $V_1(K)$ in $3$-dimensions represents the least electrostatic energy (work) required to bring a unit of positive charges in from infinity and place them on a conductor of shape $K$. The charges then stay in place due to their mutual repulsion. A fuller discussion of this interpretation can be found in Baernstein \cite[Section 5.5]{B19}.  

Logarithmic energy in $2$ dimensions can be interpreted similarly by extending the charge distribution uniformly in the vertical direction and measuring the energy per unit length; see \cite[Section 1.5]{L93thesis}. 

\subsection*{Existence of an equilibrium measure} A weak-$*$ compactness argument (the Helly selection principle) applied to probability measures on $K$ shows the existence of a measure achieving the minimum in the definition of the energy, for both the logarithmic and Riesz situations with $0<p<n$ (see \cite[pp.{\,}131--132, 168]{L72}), and for more general kernels too; see \cite[Lemma 4.1.3]{BHS19}. 

\subsection*{Uniqueness of the equilibrium measure} A modern treatment of uniqueness for logarithmic equilibrium measure in all dimensions is given by Borodachov, Hardin and Saff \cite[Theorem 4.4.8]{BHS19}, and for Riesz equilibrium measure in \cite[Theorem 4.4.5]{BHS19}, assuming of course that $K$ has finite energy (positive capacity). These approaches handle also more general kernels. 

Uniqueness of the Riesz equilibrium measure can be found also in Landkof \cite[pp.{\,}132--133]{L72}. Uniqueness of the logarithmic equilibrium measure in $2$ dimensions is well known too: for example, \cite[p.\,168]{L72}, or the appealing variant in Saff and Totik \cite[Theorem I.1.3, Lemma I.1.8]{ST97}. 

\subsection*{Capacity formulas for the ball} The logarithmic capacity of an interval in $1$ dimension is a quarter of its length:
\[
\capzero(\overline{\B}^1) = \frac{1}{2}
\]
with equilibrium measure 
\[
d\mu(x) = \frac{1}{\pi} \, \frac{dx}{(1-x^2)^{1/2}} , \qquad -1<x<1 .
\]
This formula and related ones for $p$-capacity of an interval can be found in \cite[Proposition 4.6.1]{BHS19}. 

Assume next that $n \geq 2$. The unit ball has $(n-2)$-capacity  
\[
\capntwo(\overline{\B}^n) = 1 
\]
and the equilibrium measure is normalized $(n-1)$-dimensional surface area measure on $\Sph^{n-1}$. The Riesz $(n-1)$-capacity of the ball is 
\[
\capnone(\overline{\B}^n) =  \left( \frac{\Gamma(n/2)}{\Gamma(1/2)\Gamma((n+1)/2)} \right)^{\! \! 1/(n-1)} \! \! .
\]
 (For example, when the $2$-dimensional disk is regarded as a set in $3$ dimensions, it has Newtonian capacity $\capone(\overline{\B}^2) = 2/\pi$.) The equilibrium measure for $\capnone(\overline{\B}^n)$ is  
\[
d\mu(x) = \frac{2}{|\Sph^{n-1}|} \frac{1}{B \big( \frac{n}{2},\frac{1}{2} \big)} \, \frac{dx}{(1-|x|^2)^{1/2}} , \qquad |x|<1 ,
\]
where the beta function is $B( \frac{n}{2},\frac{1}{2}) = \Gamma(\frac{n}{2}) \Gamma(\frac{1}{2}) / \Gamma(\frac{n+1}{2})$. For all these claims, see \cite[Theorem 4.6.7, Proposition 4.6.4]{BHS19}, or else \cite[p.{\,}163, 172]{L72}, being mindful that when those authors define capacity they do not take the $p$-th root of the energy, whereas in this paper we do take the root.  

\subsection*{Properties of Newtonian and logarithmic equilibrium potentials} Consider a compact set $K \subset \Rm, m \geq 2$, with positive $(m-2)$-capacity $\capmtwo(K)>0$, and equilibrium measure $\mu$. The Newtonian equilibrium potential is
\[
u(x) = \int_K \frac{1}{|x-y|^{m-2}} \, d\mu(y) , \qquad x \in \Rm ,
\]
when $m \geq 3$, and when $m=2$ the logarithmic potential is 
\[
u(x) = \int_K \log \frac{1}{|x-y|} \, d\mu(y) , \qquad x \in \R^2 .
\]
The potential is a superharmonic function on $\Rm$ that is smooth and harmonic away from the support of $\mu$. The potential lies below the energy at every point:  
\begin{lemma} \label{upperu}
Let $m \geq 3$. For all $x \in \Rm$, 
\[
u(x) \leq V_{m-2}(K) .
\]
Equality holds at all interior points of $K$ and at all boundary points except perhaps on an exceptional subset $Z \subset \partial K$ of inner capacity zero. The inequality is strict, $u(x) < V_{m-2}(K)$, on the unbounded component of the complement $\Rm \setminus K$. 

If $K$ is a ball in $\Rm$ or $\R^{m-1}$, then $u(x) = V_{m-2}(K)$ on $K$ and $u(x) < V_{m-2}(K)$ on $\Rm \setminus K$, so that equality holds if and only if $x$ lies in the ball. 

When $m=2$, analogous inequalities hold for the logarithmic potential and energy. 
\end{lemma}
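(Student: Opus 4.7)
My plan is to prove \autoref{upperu} in the classical Frostman style, by combining the variational characterization of the equilibrium measure with the maximum principle for harmonic functions.

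The starting point is the fundamental variational inequality. For any probability measure $\nu$ on $K$ with finite $(m-2)$-energy, the perturbation $\mu_t = (1-t)\mu + t\nu$ has energy minimized at $t=0$, and differentiating in $t$ at $0$ yields $\int u \, d\nu \geq V_{m-2}(K)$. A standard contradiction argument then upgrades this integrated inequality to the pointwise lower bound $u(x) \geq V_{m-2}(K)$ quasi-everywhere on $K$: if the set $\{x \in K : u(x) < V_{m-2}(K) - \varepsilon\}$ had positive inner $(m-2)$-capacity, the equilibrium measure of any compact subset of positive capacity would furnish a test $\nu$ violating the integrated bound. Next, integrating this pointwise lower bound against $\mu$ and comparing with the identity $\int u \, d\mu = V_{m-2}(K)$ forces $u = V_{m-2}(K)$ $\mu$-almost everywhere; since superharmonic potentials are lower semicontinuous, the set $\{u > V_{m-2}(K)\}$ is open and $\mu$-null, hence disjoint from $\mathrm{supp}(\mu)$, giving $u \leq V_{m-2}(K)$ on $\mathrm{supp}(\mu)$.

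To extend this upper bound off $\mathrm{supp}(\mu)$, I would use that $u$ is harmonic on the open set $\Rm \setminus \mathrm{supp}(\mu)$ and decays to $0$ at infinity when $m \geq 3$. The maximum principle (boundary values at most $V_{m-2}(K)$, limit at infinity equal to $0 < V_{m-2}(K)$) then propagates $u \leq V_{m-2}(K)$ throughout $\Rm$, with strict inequality on the unbounded component by the strong maximum principle. For equality at interior points $x_0 \in \mathrm{int}(K)$, choose $r$ with $\overline{B(x_0,r)} \subset K$; the q.e.\ inequality $u \geq V_{m-2}(K)$ on $\partial B(x_0, r)$ holds in fact surface-almost-everywhere, since sets of Newtonian capacity zero have vanishing $(m-1)$-Hausdorff measure when $m \geq 3$. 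The superharmonic spherical mean inequality then yields $u(x_0) \geq V_{m-2}(K)$, and combined with the upper bound, equality. The equality statement on $\partial K$ is immediate from the q.e.\ lower bound itself.

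The ball cases follow from the explicit equilibrium measures recorded in \autoref{sec:background}: the classical Newton shell calculation for normalized surface measure on $\partial \B^m$ gives a potential equal to $1/\rho^{m-2}$ inside the ball and $1/|x|^{m-2}$ outside, matching the claimed equalities and strict inequalities. For the $(m-1)$-ball regarded inside $\Rm$, one evaluates the potential of the explicit density formula directly. The logarithmic case $m=2$ runs in parallel, the only adjustment being that $u(x) \to -\infty$ as $|x|\to\infty$ replaces $u \to 0$, which is still consistent with the maximum principle since this limit lies below $V_{log}(K)$ eventually. The main obstacle, in my view, lies in securing the q.e.\ lower bound with a truly inner-capacity-zero exceptional set: this needs both the contradiction argument sketched above and the auxiliary fact (which follows from finiteness of the energy $\iint |x-y|^{-(m-2)} d\mu(x)d\mu(y) < \infty$) that $\mu$ itself assigns zero mass to every set of $(m-2)$-capacity zero.
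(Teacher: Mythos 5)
Your proposal reproves from scratch the content that the paper simply cites from Hayman and Kennedy \cite[Theorem 5.17]{HK76} (Frostman's theorem for equilibrium potentials), then handles the interior equality, strict inequality, and ball cases in essentially the same way as the paper: super-meanvalue property plus a measure-zero argument for the exceptional set at interior points, strong maximum principle on the unbounded component, and the explicit equilibrium measures for balls. Your derivation of the variational inequality $\int u\,d\nu \geq V_{m-2}(K)$, its upgrade to the quasi-everywhere pointwise lower bound, and the deduction that $u \leq V_{m-2}(K)$ on $\mathrm{supp}(\mu)$ via lower semicontinuity are all correct.

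However, there is a genuine gap in the step where you ``propagate'' the bound $u \leq V_{m-2}(K)$ from $\mathrm{supp}(\mu)$ to all of $\Rm$ by ``the maximum principle.'' To apply the ordinary maximum principle for the harmonic function $u$ on the open set $\Omega = \Rm \setminus \mathrm{supp}(\mu)$, you would need $\limsup_{y\to x,\, y\in\Omega} u(y) \leq V_{m-2}(K)$ at every $x \in \partial\Omega$. Knowing $u(x) \leq V_{m-2}(K)$ for $x \in \mathrm{supp}(\mu)$ does not yield this, because a superharmonic potential is only lower semicontinuous; nothing a priori prevents $u$ from exceeding $V_{m-2}(K)$ arbitrarily close to a boundary point while remaining $\leq V_{m-2}(K)$ on $\mathrm{supp}(\mu)$ itself. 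This ``propagation'' is exactly the first (Frostman--Maria) maximum principle for potentials, whose classical proofs require an extra ingredient such as the continuity principle or a careful approximation of $\mu$ by measures with continuous potentials (see Landkof \cite[Theorems 1.7 and 1.10]{L72} or \cite[Theorem 5.17]{HK76}). You flagged the quasi-everywhere lower bound as the main obstacle, but in fact you handled that part adequately; the real missing piece is this domination step. Citing the domination principle (as the paper effectively does by citing \cite{HK76}) or inserting the continuity-principle argument would close the gap; the remainder of your argument is sound.
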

\begin{proof}
See Hayman and Kennedy \cite[Theorem 5.17]{HK76} for the inequality, with equality holding on $K$ except for a set of inner capacity zero; note that the exceptional set necessarily has Lebesgue measure zero by \cite[p.{\,}134]{L72}. Equality then holds at interior points by the super-meanvalue property of the potential. Strict inequality on the unbounded complement follows from the strong maximum principle. The ball properties can be found in Landkof \cite[pp.{\,}163--164]{L72}. 
\end{proof}
The exceptional set $Z \subset \partial K$ on which the potential is strictly less than the energy has Lebesgue measure zero in $\Rm$, and if $K$ happens to lie in $\R^{m-1}$ then $Z$ has zero $(m-1)$-dimensional Lebesgue measure too, by the same reasoning.  

The potential $u$ is continuous on $\Rm \setminus Z$: this continuity is clear for $x$ in the complement of the support of $\mu$, and so in particular $u$ is continuous on the complement of $K$, while continuity holds at each $x \in K \setminus Z$ because 
\[
V_{m-2}(K) = u(x) \leq \liminf_{y \to x} u(y) \leq \limsup_{y \to x} u(y) \leq V_{m-2}(K) 
\]
where the first inequality is by lower semicontinuity of the superharmonic function $u$ and the third inequality is immediate from \autoref{upperu}. When $m=2$, simply replace $V_{m-2}$ with $V_{log}$. In particular, if $K$ is a ball in $\Rm$ or $\R^{m-1}$ then $Z$ is empty by \autoref{upperu} and so the equilibrium potential is continuous on $\Rm$.

\bibliographystyle{plain}

\end{document}